\newtheorem{theor}{~~~~Theorem}
\newtheorem{cor}{~~~~Corollary}
\newtheorem{lemma}{~~~~Lemma}
\newtheorem{remark}{~~~~Remark}
\newtheorem{defin}{~~~~Definition}
 \newcommand{\ham}{\mathbf H}
 \newcommand{\x}{\mathbf x}
 \newcommand{\p}{\mathbf p}
 \newcommand{\tv}{\mathbf v}
 \newcommand{\w}{\mathbf w}
\begin{document}
\bibliographystyle{plain}
\title[ Curvatures and hyperbolic flows in Finsler geometry]{Curvatures and hyperbolic flows for natural mechanical systems in Finsler geometry}

\author{Chengbo Li}
\address{School of Mathematics, Tianjin University, Tianjin, 300072, P.R.China}
\email{chengboli@tju.edu.cn}
\thanks{C. Li's research was supported  by the Scientific Research Foundation for the Returned Overseas Chinese Scholars, State Education Ministry. }.
\keywords{Finsler geometry--natural mechanical systems--Jacobi equations--Curvatures--Hyperbolic flows}
\subjclass[2000]{53C17, 70G45, 49J15, 34C10}

\begin{abstract}
 We consider a natural mechanical system on a Finsler manifold and
study its \emph{curvature} using the intrinsic Jacobi equations (called \emph{Jacobi curves})
along the extremals of the least action of the system. The curvature for such a system is expressed in terms of the
 Riemann curvature and the Chern curvature (involving the gradient of the potential) of the Finsler manifold and the Hessian of the potential w.r.t. a Riemannian metric induced from the Finsler metric. As an application,
  we give sufficient
conditions for the Hamiltonian flows of  the least action  to be hyperbolic and show
  new examples of Anosov flows.

\end{abstract}

\date{\today}

\maketitle

\section{Introduction}
In 1990s A. Agrachev and R. Gamkrelidze(\cite{AgGa})
 proposed the program of studying  an extremal of the optimal control problems  on a manifold $M$ through the intrinsic Jacobi equations (called \emph{Jacobi curves}) along the extremal. The Jacobi curve is a curve in a Lagrangian Grassmannian defined up to a symplectic transformation and containing all information about the solutions of the Jacobi equations along this extremal. Based on the study of the differential geometry of the parameterized curves in Lagrangian Grassmannians(see \cite{AgGa, LiZe1,LiZe2} and the recent monograph \cite{AgBoBa}), we can apply  to Jacobi curves to constructing the curvature-type invariants (called curvatures in short) for natural mechanical systems on various smooth geometric structures including (sub-)Riemannian and (sub-)Finsler manifolds. The curvatures for (sub-)Riemannian and (sub-)Finsler geodesic problems then can be taken as particular cases. We refer the readers to \cite{ABR1,ABR2} for the development of sub-Riemannian curvatures in the last few years.  Moreover, by using the curvatures we can  derive the qualitative properties of the extremals of the optimal control problems, such as various comparison theorems and hyperbolicity.

\smallskip




 For the simplest case of a Riemannian geodesic problem, the aforementioned curvature invariants  essentially coincides with the Riemannian curvature tensor (\cite{AgGa}). Further, for the least action problems of  a natural mechanical system  on a Riemannian manifold, the curvatures are expressed by the Riemannian curvature tensor and the Hessian of the potential (see \cite{AgChZe}).

%

 It is very natural to expect that for a Finsler geodesic problem  the curvature coincides with the Riemann curvature of the Finsler manifold(at some reference vector) and it is verified from a unified Hamiltonian viewpoint (\cite{Le1}). Now a natural question arises: how to express the curvature for a natural mechanical system on a Finsler manifold using the geometric quantities in the Finsler manifold and the potential?

%

\medskip
One reason to find the answer to the above question is that it can be used to studying  the hyperbolicity of the Hamiltonian flows. We first of all recall the following  (see e.g. \cite{KaHa})

\begin{defin}
Let $e^{tX},t\in\mathbb R$ be the flow generated by the vector
field $X$ on a manifold $P$. A compact invariant set $A\subset P$ of
the flow $e^{tX}$ is called a hyperbolic set if there exists a
Riemannian structure in a neighborhood of $A$, a positive constant
$\delta$, and a splitting: $T_zP=E_z^+\oplus E_z^-\oplus\mathbb
RX(z),\ z\in A$ such that $X(z)\neq 0$ and
\begin{enumerate}
\item $e^{tX}_*E^+_z=E^+_{e^{tX}z},\ e^{tX}_*E^-_z=E^-_{e^{tX}z},$
\item $\|e^{tX}_*\zeta^+\|\geq e^{\delta t}\|\zeta^+\|,\
\forall t>0, \forall \zeta^+\in E^+_z,$
\item $\|e^{tX}_*\zeta^-\|\leq e^{-\delta t}\|\zeta^-\|,\
\forall t>0, \forall \zeta^-\in E^-_z.$
\end{enumerate}
If the entire manifold $P$ is a hyperbolic set, then the flow
$e^{tX}$ is called a flow of Anosov type.
\end{defin}

It is well known that the geodesic flows on a closed Riemannian
manifold with negative sectional curvature is of  Anosov type
(\cite{Ano}). Such a result has a Finsler version (\cite{Fou}): the geodesic flows on a closed reversible Finsler manifold with negative flag curvature must be of Anosov type. Actually both of them can be  derived from a more general criteria of hyperbolic flows from the Hamiltonian viewpoint (\cite{AgCh}) and this criteria can also be used to get  some sufficient conditions for  the  Hamiltonian flows for a natural mechanical system  on a Finsler manifold.

\medskip
The purpose of the present draft is two-fold. On one hand we interpret the curvatures for the Finsler least action problems by the curvature tensors (Riemannian curvature and Chern curvature) and the Hessian of the potential in Finsler geometry by the formulas which can be plugged into the framework of calculations in \cite{LiZe3}. On the other hand we also get the sufficient conditions for the Hamiltonian flows for Finsler least action problems to be of Anosov type on Finsler manifolds.

\medskip
 Note that we always use  Einstein summation convention:  when an index variable appears twice in a single term it implies summation.

\section{Main results}


In this section we present the main results on curvatures and hyperbolic flows. The proofs are postponed to the next section.
\subsection{Preliminaries in Finsler geometry}

We first recall various notations which are needed in the rest of the draft (see e.g. \cite{Shen} for more details).

Given a local coordinate $(x^i)^n_{i=1}$ on an open set $\Omega$ in a smooth manifold $M$ of dimension $n$, we will always use the coordinate $(x^i,y^j)^n_{i,j=1}$ of $T\Omega$ with
$$\tv=y^j\frac{\partial}{\partial x^j}|_{\x} \in T_{\x}M \quad \quad \mbox{for }\x \in \Omega.$$
While we use the  coordinate $(x^i,p_j)^n_{i,j=1}$ of the cotangent bundle $T^*\Omega$ with
$$\p=p_j dx^j|_{\x} \in T^*_{\x}M \quad \quad \mbox{for }\x \in \Omega.$$

\begin{defin}
{\bf(Finsler structures)}
 A nonnegative function $F:TM\rightarrow [0,\infty)$ is called a smooth Finsler structure of $M$ if the following three conditions hold.\\
(1) (Regularity) $F$ is smooth on $TM\backslash0$, where $0$ stands for the zero section.\\
(2) (Positive 1-homogeneity)  $F(c\tv)=cF(\tv)$ for all $\tv\in TM$ and $c>0$.\\
(3) (Strong convexity) The $n\times n$ matrix
$$(g_{ij}(\tv))^n_{i,j=1}:=\left(\frac{1}{2}\frac{\partial^2(F^2)}{\partial y^i \partial y^j}(\tv)\right)^n_{i,j=1}$$
is positive-definite for all $\tv\in TM\backslash0$.\\
We call the manifold $M$ a (smooth) Finsler manifold with Minkowski norm $F$.
\end{defin}
For $\x_0,\x_1 \in M$, we define the \emph{distance} from $\x_0$ to $\x_1$ in a natural way by
$$d(\x_0,\x_1):=\inf_{\x}\int^1_0F(\dot\x(t))dt,$$
where the infimum is taken over all $C^1$-curves $\x:[0,1]\rightarrow M$ such that $\x(0)=\x_0$ and $\x(1)=\x_1$. Since $F$ is only positively homogeneous,  the distance can be non-reversible, i.e. $d(\x_0,\x_1)\neq d(\x_1,\x_0)$ for some $\x_0,\x_1\in M$. A smooth curve $\x(\cdot)$ on $M$ is called a $\emph{geodesic}$ if it is locally minimizing and has a constant speed (i.e. $F(\dot\x)$ is constant).

For each $\tv\in T_{\x}M\backslash\{0\}$, the positive-definite matrix $(g_{ij}(\tv))^n_{i,j=1}$  induces the Riemannian structure $g_{\tv}$ of $T_{\x}M$ as
$$g_{\tv}\left(a^i\frac{\partial}{\partial x^i}|_{\x},b^j\frac{\partial}{\partial x^j}|_{\x}\right):=a^ib^jg_{ij}(\tv).$$


For later convenience, we recall a basic fact on homogeneous functions.

\begin{theor}\label{homo}
 Suppose that a differentiable function $H:\mathbb{R}^n\backslash\{0\}\rightarrow \mathbb{R}$ is positively r-homogeneous, i.e. $H(c\tv)=c^rH(\tv)$ for some $r\in \mathbb{R}$ and all $c>0$
and $\tv\in \mathbb{R}^n\backslash\{0\}$. Then we have
\begin{center}
$\frac{\partial H}{\partial y^i}(\tv)y^i=rH(\tv)$ \quad \quad for all $\tv\in \mathbb{R}^n\backslash\{0\}$.
\end{center}
\end{theor}

\medskip
The $\emph{Cartan tensor}$
$$C_{ijk}(\tv):=\frac{1}{2}\frac{\partial g_{ij}}{\partial y^k}(\tv)\quad\quad \mbox{for } \tv\in TM\backslash0$$
is a pure Finsler quantity. Indeed, ${C_{ijk}}'s$ vanish everywhere on $TM\backslash0$ if and only if $F$ comes from a Riemannian metric. As $g_{ij}$ is positively 0-homogeneous on each $T_{\x}M\backslash0$, Theorem \ref{homo} yields
\begin{equation}\label{cartan}
C_{ijk}(\tv)y^i=C_{ijk}(\tv)y^j=C_{ijk}(\tv)y^k=0
\end{equation}
for all $\tv\in TM\backslash0$.

\smallskip
 Define the formal Christoffel symbols
\begin{equation*}\label{chris}
\gamma^k_{ij}(\tv):=\frac{1}{2}g^{kl}(\tv)\left\{\frac{\partial g_{il}}{\partial x^j}(\tv)+\frac{\partial g_{lj}}{\partial x^i}(\tv)-\frac{\partial g_{ij}}{\partial x^l}(\tv)\right\}\  \hbox{for}\ \tv\in TM\backslash 0,
\end{equation*}
where $(g^{ij}(\tv))$ stands for the inverse matrix of $(g_{ij}(\tv))$. We also introduce the geodesic spray coefficients and the nonlinear connection. Let
$$G^i(\tv):=\gamma^i_{jk}(\tv)y^jy^k,\quad N^i_j(\tv):=\frac{1}{2}\frac{\partial G^i}{\partial y^j}(\tv) \  \hbox{for}\ \tv\in TM\backslash 0,$$
and $G^i(0)=N^i_j(0)=0$ by convention.

Chern connection is torsion free and almost compatible with the metric and its coefficients  are given by
$$\Gamma^i_{jk}:=\gamma^i_{jk}-g^{il}(C_{jlm}N^m_k+C_{lkm}N^m_j-C_{jkm}N_l^m)\ \hbox{on}\  TM\backslash 0.$$
One can show
\begin{equation}\label{N_Gamma}
N^i_j=\Gamma^i_{jk}y^k.
\end{equation}

\smallskip
We recall various curvature tensors in Finsler geometry which will be needed in the draft. Let $U=U^k\frac{\partial}{\partial{x^k}}, V=V^l\frac{\partial}{\partial{x^l}}, W=W^j\frac{\partial}{\partial{x^j}}$. The Chern connection gives the \emph{Riemannian curvature tensor} $\mathcal R$ which can be written as
\begin{equation*}
\mathcal R(U,V)W:=R^{\ i}_{j\ kl}U^kV^lW^j\frac{\partial}{\partial{x^i}}
\end{equation*}
where
\begin{equation}\label{RieCoe}
R^{\ i}_{j\ kl}=\frac{\partial \Gamma^i_{jl}}{\partial x^k}-\frac{\partial\Gamma^i_{jk}}{\partial x^l}+\frac{\partial \Gamma^i_{jk}}{\partial y^m}N^m_l-\frac{\partial \Gamma^i_{jl}}{\partial y^m}N^m_k+\Gamma_{jl}^m\Gamma_{mk}^i-\Gamma_{jk}^m\Gamma_{ml}^i.
\end{equation}

Another curvature is \emph{Chern curvature} which is a non-Riemannian curvature defined by
\begin{equation*}\label{CheCoe}
\mathbf P_{\tv}(U,V,W):=P^{\ i}_{j\ kl}(\tv)U^kV^lW^j\frac{\partial}{\partial x^i},
\end{equation*}
where $\mathbf P^{\ i}_{j\ kl}=-\frac{\partial \Gamma^i_{jk}}{\partial y^l}.$

\medskip

Let $P\subset T_xM$ be a tangent plane. For a vector $\tv\in P\backslash \{0\}$, define
\begin{equation}\label{FlaCur}
K(P,\tv):=\frac{g_{\tv}({\mathcal R(\w,\tv)\tv,\w})}{g_{\tv}(\tv,\tv)g_{\tv}(\w,\w)-g_{\tv}(\tv,\w)^2},
\end{equation}
where $\w \in P$ such that $P = {\rm span}\{\tv,\w\}.$ The number $K(P,\tv)$ is called the \emph{flag curvature} of the flag $(P,\tv)$ in $T_\x M$.

\medskip
Finally we recall the Legendre transform 
 in Finsler geometry.
 Denote by $F^*$ the dual norm on $T^*M$, i.e.
 $$F^*(\p):=\sup_{F(\tv)=1}\ \p(\tv), \ \p\in T^*_{\x}M.$$
Recall that we can write
$$F^*(\p)^2=g^*_{ji}(\p)p_ip_j,\ g^*_{ji}(\p)=\frac{1}{2}\frac{\partial^2((F^*)^2)}{\partial p_j\partial p_i}(\p).$$

Let us denote by $\mathcal L^*:T^*M\rightarrow TM$ the \emph{Legendre transform} associated with $F$ and $F^*$. More precisely,
$\mathcal L^*(\p)$ is the unique vector $\tv=y^i(\p)\frac{\partial}{\partial{x_i}}\in T_{\x}M$ such that
\begin{equation*}\label{legend}
\p(\tv)=F^*(\p)^2,\quad F(\tv)=F^*(\p).
\end{equation*}
For later use, we recall the following relations.
\begin{equation}\label{identi}
y^i(\p)=g^*_{ji}(\p)p_j,\quad p_i=g_{ij}(\tv)y^j(\p).
\end{equation}
%

\medskip

In the remainder of the draft, for the reason of simplicity we adopt the following convention on the notations: denote by $\tv$ the image of $\p$ via the Legendre transform $\mathcal L^*$, i.e. $\tv=\mathcal L^*(\p)$ and write $\tv=y^i\frac{\partial}{\partial x_i}.$

\subsection{Curvatures for least action problems for a natural mechanical system on a Finsler manifold}

On a Finsler manifold $M$ with Minkowski  norm $F$, consider the Finsler version of the least action problem of a natural mechanical system
\begin{equation}\label{FinLeast}
\begin{split}
&A(\x(\cdot))=\int^T_0 \frac{1}{2}F(\dot \x(t))^2-U(\x(t))dt\rightarrow {\rm min}\\
&\x(0)=\x_0,\quad \x(T)=\x_1.
\end{split}
\end{equation}

As in Riemannian geometry, the minimizers coincide when minimizing the length and the kinetic energy. Hence, when the potential $U$ is identical to a null function, the above problem reduces to a Finsler geodesic problem.  And since a Riemannian metric is a Finsler metric satisfying quadratic condition, the Riemannian least action problem is a particular case of the Finsler least action problem.

As optimal control problems, the Finsler least action problems can be solved by Pontryagin Maximum Principle(\cite{Pon}). Let $\sigma$ be the canonical symplectic form on $T^*M$, i.e. $\sigma=dx^i\wedge dp_i$. Let $\ham$ be the maximized Hamiltonian(see Lemma \ref{hamil} below). Then Pontryagin Maximum Principle tells that
 the minimizers are projections to the manifold $M$ of the Hamiltonian flows generated by the vector field $\vec\ham$ on $T^*M$.

\begin{lemma}\label{hamil} The maximized Hamiltonian $\ham$ is written as follows.
\begin{equation}\label{HamMec}
\ham(\p)=\frac{1}{2}F^*(\p)^2+U(\x)=\frac{1}{2}F(\tv)^2+U(\x).
\end{equation}
\end{lemma}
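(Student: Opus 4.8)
The plan is to realize \eqref{FinLeast} as an optimal control problem and compute the maximized Hamiltonian of the Pontryagin Maximum Principle directly, the main work being the evaluation of a fibrewise Legendre--Fenchel transform.

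First I would cast \eqref{FinLeast} in control form: take the state to be $\x$, the control to be the velocity $u=\dot\x\in T_\x M$, the dynamics to be $\dot\x=u$, and the running cost to be $L(\x,u)=\frac12 F(u)^2-U(\x)$. In the normal case the control-dependent Hamiltonian (in the convention that it is to be maximized for a minimization problem) is
\begin{equation*}
h_u(\p)=\p(u)-L(\x,u)=\p(u)-\tfrac12 F(u)^2+U(\x),
\end{equation*}
so that $\ham(\p)=\max_{u\in T_\x M}h_u(\p)$. Since the term $U(\x)$ does not involve $u$, it factors out of the maximization and the problem reduces to computing
\begin{equation*}
\Phi(\p):=\max_{u\in T_\x M}\Big(\p(u)-\tfrac12 F(u)^2\Big).
\end{equation*}

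To evaluate $\Phi(\p)$ I would exploit the positive $1$-homogeneity of $F$ by splitting $u$ into radial and angular parts: write $u=r\tv_0$ with $r\ge 0$ and $F(\tv_0)=1$, so that $\p(u)-\frac12 F(u)^2=r\,\p(\tv_0)-\frac12 r^2$. Maximizing this scalar quadratic over $r\ge 0$ gives $\frac12\big(\max\{\p(\tv_0),0\}\big)^2$ for each fixed direction $\tv_0$, and taking the supremum over the unit sphere $\{F(\tv_0)=1\}$ together with $F^*(\p)=\sup_{F(\tv)=1}\p(\tv)\ge 0$ yields $\Phi(\p)=\frac12 F^*(\p)^2$, whence $\ham(\p)=\frac12 F^*(\p)^2+U(\x)$. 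The maximizing control is $u=F^*(\p)\,\tv_0$ at the optimal direction; it satisfies $\p(u)=F^*(\p)^2$ and $F(u)=F^*(\p)$, which are exactly the defining relations of $\mathcal L^*(\p)$, so the maximizer is $\tv=\mathcal L^*(\p)$, consistent with the standing convention. Finally the definition of the Legendre transform gives $F(\tv)=F^*(\p)$, turning $\frac12 F^*(\p)^2$ into $\frac12 F(\tv)^2$ and establishing \eqref{HamMec}.

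The individual steps are routine; the point requiring the most care is the fibrewise maximization, where one must use strong convexity and positive homogeneity to ensure the supremum is attained at a single point off the zero section (so that $F$ is smooth there) and to identify that point with the Legendre image $\mathcal L^*(\p)$. The only other place an error could creep in is the bookkeeping of sign and normalization conventions in the Maximum Principle, which must be chosen so that the maximized Hamiltonian comes out as $\frac12 F^*(\p)^2+U(\x)$ rather than its Legendre dual.
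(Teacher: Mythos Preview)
Your proposal is correct and follows essentially the same approach as the paper: set up the PMP Hamiltonian $h_u(\p)=\p(u)-\tfrac12F(u)^2+U(\x)$ and compute its fibrewise maximum as the Legendre--Fenchel transform $\tfrac12F^*(\p)^2$, then invoke $F(\tv)=F^*(\p)$ for $\tv=\mathcal L^*(\p)$. The only difference is in how the optimization is carried out: the paper rewrites $F(\tv)^2=g_{ij}(\tv)y^iy^j$ via Euler's theorem and passes directly to $\tfrac12F^*(\p)^2$, whereas you give a more self-contained argument via the radial/angular decomposition $u=r\tv_0$, which makes the identification of the maximizer with $\mathcal L^*(\p)$ explicit.
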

\begin{proof}As a result of Theorem \ref{homo}, $F^2(\tv)=g_{ij}y^iy^j$, hence
\begin{equation*}
\begin{split}
\ham(\p)&=\max_{\tv\in T_{\x}M}(\p(\tv)-\frac{1}{2}F^2(\tv)+U(\x))
\\&=\max_{\tv\in T_{\x}M}(\p(\tv)-\frac{1}{2}g_{ij}y^iy^j+U(\x))\\
&=\frac{1}{2}F^*(\p)^2+U(\x)=\frac{1}{2}F(\tv)^2+U(\x).
\end{split}
\end{equation*}
\end{proof}

We will construct the curvatures for Finsler least action problem.
 For this let us introduce the Jacobi curves associated with an extremal of the Finsler least action problem to describe its dynamical property.
 Let us fix the level set of the Hamiltonian function $\ham$:
$$\mathcal{H}_c:=\{\lambda\in T^*M| \ham(\lambda)=c\},\ c>0.$$
Let $\Pi_{\lambda}$ be the vertical subspace of
$T_{\lambda}\mathcal{H}_c$, i.e.
$$
\Pi_{\lambda}=\{\xi\in T_{\lambda}\mathcal{H}_c:
\pi_*(\xi)=0\},
$$
where $\pi: T^*M\longrightarrow M$ is the canonical projection.
The curve defined by
\begin{equation}\label{Jacobi}
t\longmapsto \mathfrak J_{\lambda}(t):=e_*^{-t\vec
\ham}(\Pi_{e^{t\vec \ham}\lambda})/\{\mathbb{R}\vec \ham(\lambda)\}.
\end{equation}
is called the \emph{Jacobi curve} of the extremal $e^{t\vec \ham}\lambda$ (attached at the point $\lambda$).
The curve $\mathfrak J_\lambda(t)$ is a curve in the Lagrange Grassmannian of the linear symplectic space
$W_\lambda = T_\lambda\mathcal H_c/{\mathbb R\vec \ham(\lambda)}$ (endowed with the symplectic form induced in the obvious way by the canonical symplectic form $\sigma$ of $T^*M$).

Next we introduce another version of Jacobi curves $\bar{\mathfrak J}_\lambda(\cdot)$, called \emph{non-reduced Jacobi curves}, by
\begin{equation}\label{Jacobi2}
t\longmapsto \bar{\mathfrak J}_{\lambda}(t):=e_*^{-t\vec
\ham}(T_{e^{t\vec \ham}\lambda}T^*_{\pi(e^{t\vec \ham}\lambda)}M).
\end{equation}
There is a close relation between the two kinds of Jacobi curves: $\mathfrak J_\lambda(\cdot)$ can be obtained from  $\bar{\mathfrak J}_\lambda(\cdot)$  after the reduction of the first integral $\ham$ (of the Hamiltonian flow generated by $\ham$).

\medskip
Next we give a concise description of the construction of the curvature-type invariants for the parametrized curves in some Lagrangian Grassmannians. For our purpose we focus on the case of regular curves and refer the reader to the relevant references for more general cases.

Recall that the tangent space $T_\Lambda L(W)$ to the
Lagrangian Grassmannian $L(W)$ of a linear symplectic space $W$
(endowed with a symplectic form $\omega$) at the point $\Lambda$ can
be naturally identified with the space ${\rm Quad}(\Lambda)$ of all
quadratic forms on linear space $\Lambda\subset W$.
Namely, given $\mathfrak V\in T_\Lambda L(W)$ take a curve
$\Lambda(t)\in L(W)$ with $\Lambda(0)=\Lambda$ and
$\dot\Lambda=\mathfrak V$. Given some vector $l\in\Lambda$, take a
curve $\ell(\cdot)$ in $W$ such that $\ell(t)\in \Lambda(t)$ for all
$t$ and $\ell(0)=l$. Define the quadratic form
\begin{equation}
\label{quad} Q_{\mathfrak V}(l)=\omega(l,\frac{d}{dt}\ell(0)).
\end{equation}
Using the fact that the spaces $\Lambda(t)$ are Lagrangian,
it is easy to see that $Q_{\mathfrak V}(l)$ does not depend on the
choice of the curves $\ell(\cdot)$  and $\Lambda(\cdot)$ with the
above properties, but depends only on $\mathfrak V$.
So, we have the linear mapping from $T_\Lambda L(W)$ to the spaces
${\rm Quad}(\Lambda)$, $\mathfrak V\mapsto Q_{\mathfrak V}$.
A simple counting of dimensions shows that this mapping is a
bijection and it defines the required identification. A curve
$\Lambda(\cdot)$ in a Lagrangian Grassmannian is called
\emph{regular}, if its velocity  is a non-degenerate quadratic form
at any time $t$. A curve $\Lambda(\cdot)$ is called \emph{monotone}
(monotonically nondecreasing or monotonically nonincreasing) if the
velocity is sign definite (nonnegative or nonpositive) at any point.

    Note that one can show that either Jacobi curves $\mathfrak J_\lambda(\cdot)$ or the non-reduced Jacobi curves $\bar{\mathfrak J}_\lambda(\cdot)$  for the case of Finsler least action is regular and monotone (see, for example, \cite[Proposition 1]{AgZe1}).

The curvatures for regular curves in Lagrangian
Grassmannians are constructed in earlier work \cite{AgGa} and can be taken as a particular case of the results in \cite{LiZe1, LiZe2}.

\begin{theor}\label{structure}
Let $\Lambda(\cdot)$ be a regular curve in the Lagrangian Grassmannian
$L(W)$ of a $2n$-dimensional linear symplectic space $W$. Then there
exists a moving Darboux frame $(E(t), F(t))$ of $W$:
$$E(t)=(e_1(t),...,e_{n}(t)),\ F(t)=(f_1(t),...,f_{n}(t))$$
such that $\Lambda(t)={\rm span}\{E(t)\}$ and there exists a
one-parametric family of symmetric matrices $ R(t):
\Lambda(t)\rightarrow \Lambda(t)$ satisfying
\begin{equation}
\label{3structeq}
\begin{cases}
 E^{\prime}(t)=F(t),\\
 F^{\prime}(t)=-E(t)R(t).
\end{cases}
\end{equation}
The moving frame $(E(t), F(t))$ is a called a normal moving
frame of $\Lambda(t)$. A moving frame $(\widetilde E(t),\widetilde
F(t))$ is a normal moving frame of $\Lambda(t)$ if and only there
exists a constant orthogonal matrix $O$ of size $n\times n$ such
that
\begin{equation}\label{3U}
\widetilde E(t)=E(t)O,\ \widetilde F(t)=F(t)O.
\end{equation}
\end{theor}

As a matter of fact, normal moving frames define a principal
$O(n)$-bundle of symplectic frame in $W$ endowed with a canonical
connection. Also, relations \eqref{3U} imply that the following
$n$-dimensional subspaces
\begin{equation}\label{3V}
\Lambda^{\rm trans}(t)={\rm span}\{F(t)\}={\rm span}\{f_1(t),...,f_n(t)\}
\end{equation}
of $W$ does not depend on the choice of the normal moving frame. It
is called the \emph{canonical complement} of $\Lambda(t)$ in $W$.
Moreover, the subspaces $\Lambda(t)$ and $\Lambda^{\rm trans}(t)$
are endowed with the \emph {canonical Euclidean structure} such that
the tuple of vectors $E(t)$ and $F(t)$ constitute an orthonormal
frame w.r.t. to it, respectively.
The linear map from $\Lambda(t)$ to $\Lambda(t)$ with the matrix
$R(t)$ from (\ref{3structeq}) in the basis $\{E(t)\}$, is
independent of the choice of normal moving frames and is
self-adjoint with respect to the Euclidean structure in
$\Lambda(t)$. It will be denoted by $\mathfrak R(t)$ and it is
called the \emph {curvature map} of the curve $\Lambda(t)$.

\medskip
The construction of curvature map for curves in Lagrangian
Grassmannians naturally applies to the Jacobi curves $\mathfrak J_\lambda(\cdot)$ and non-reduced Jacobi curves $\bar{\mathfrak J}(\cdot)$ under consideration.

Let $\mathfrak J^{trans}(t)$ and $\bar{\mathfrak J}^{trans}(t)$ be the canonical complement of $\mathfrak J(t)$ (in the linear symplectic space $W_\lambda$) and $\bar{\mathfrak J}(t)$ (in the linear symplectic space $T_\lambda T^*M$), respectively. Then, $\mathfrak J^{trans}_\lambda:=\mathfrak J^{trans}(0)$ and $\bar{\mathfrak J}^{trans}_\lambda:=\bar{\mathfrak J}^{trans}(0)$ give the canonical complement of $ \Pi_\lambda$ (in $W_\lambda$) and $T_\lambda T^*_\x M$ (in  $T_\lambda T^*M$), respectively. Note here we used that $\mathfrak J(0)$ is naturally identified with $\Pi_\lambda$ and $\bar{\mathfrak J}(0)=T_\lambda T^*_\x M.$ See Subsection 3.2 for a more detailed discussion on the canonical complements.

Let $\mathfrak R_{\lambda}(t)$ be the curvature for the
Jacobi curve $\mathfrak J_{\lambda}(\cdot)$ and let $\bar{\mathfrak R}_{\lambda}(\cdot)$ be the curvature  for the
non-reduced Jacobi curve $\bar{\mathfrak J}_{\lambda}(\cdot)$. Then the linear maps
$$\mathfrak R_\lambda:=\mathfrak
R_\lambda(t)|_{t=0}:\Pi_\lambda\rightarrow\Pi_\lambda$$
and
$$\bar{\mathfrak R}_\lambda:=\bar{\mathfrak
R}_\lambda(t)|_{t=0}:T_{\lambda}T^*_\x M\rightarrow T_{\lambda}T^*_\x M$$
are said to be the
\emph{curvature} (at $\lambda$) and \emph{non-reduced curvature} (at $\lambda$) of the Finsler least action problem.

\subsection{Statements of the Main Results}
To show the results on curvatures we need the following notations. Let, as before, $\tv=\mathcal L^*(\p)$
 and $g_{\tv}$ be the Riemannian metric induced by the vector $\tv$. Note that $T_\lambda T_\x^*M$ is identified with $T_\x^*M$. Then for any $\xi\in T_\lambda T^*_\x M(\sim T_\x^*M)$  we associate a vector $\xi^h\in T_\x M$ via the Riemannian metric $g_{\tv}$, i.e.
$ g_{\tv}(\xi^h,\cdot)=\xi.$
 In particular  $\p^h=\tv$, which is a consequence of the identity $$(\mathcal L^*)^{-1}(\tv)=g_{\tv}(\tv,\cdot).$$

Note that $\Pi_\lambda$ is embedded in $T_\lambda T^*_\x M$. Hence, the above operator of superscript $^h$ also applies to $\xi\in \Pi_\lambda$ to get a vector $\xi^h\in T_{\x} M$.\\

 Let $\mathcal R$ be the Riemannian curvature tensor, $\mathbf P$ the Chern curvature tensor, $\mathbf{Hess}_{\tv}$ the Hessian and $\nabla_\tv U$  the gradient w.r.t. the Riemannian metric $g_{\tv}$.
\begin{theor}\label{CurFin}
The non-reduced curvature for the Finsler least action problems
 satisfies  for $\forall {\bar\xi},{\bar\eta}\in T_{\lambda}T^*_{\x} M$,
\begin{equation*}
\begin{split}
g_{\tv}((\bar{\mathfrak R}_\lambda{\bar\xi})^h,{\bar\eta}^h)&=g_{\tv}(\mathcal R({\bar\xi}^h,\tv){\tv,\bar\eta}^h)
+{\mathbf{Hess_{\tv}}}\ U({\bar\xi}^h,{\bar\eta}^h)\\
&+g_{\tv}(\mathbf{P_{\tv}}\left({\bar\xi}^h,\nabla_\tv U,{\bar\eta}^h\right),\tv).
\end{split}
\end{equation*}

\end{theor}

\begin{theor}\label{FinMec2}
The curvature $\mathfrak R_\lambda$ satisfy for $\forall \xi,\eta\in\Pi_\lambda$,
\begin{equation*}
\begin{split}
g_{\tv}(({\mathfrak R}_\lambda\xi)^h,\eta^h)&=g_{\tv}(\mathcal R({\xi}^h,\tv){\tv,\eta}^h)
+{\mathbf{Hess_{\tv}}}\ U(\xi^h,\eta^h)\\
&+g_{\tv}(\mathbf{P_{\tv}}\left(\xi^h,\nabla_\tv U,\eta^h\right),\tv)\\
& +\frac{3}{F(\tv)^2}g_{\tv}( \xi^h,\nabla_\tv U)g_{\tv}( \eta^h,\nabla_\tv U).
\end{split}\end{equation*}
\end{theor}

\begin{cor}\label{FinGeo}
For Finsler geodesic problems ($U$=0), the curvatures satisfy $\forall \xi,\eta\in\Pi_\lambda$
\begin{equation*}
g_\tv(({\mathfrak R}_\lambda\xi)^h,\eta^h)=g_{\tv}(\mathcal R(\tv,\xi^h)\eta^h,\tv).
\end{equation*}
\end{cor}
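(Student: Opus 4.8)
The plan is to deduce Corollary \ref{FinGeo} as a direct specialization of Theorem \ref{FinMec2}, using the fact (noted in the text after \eqref{FinLeast}) that a Finsler geodesic problem is exactly the least action problem with $U\equiv 0$. First I would substitute $U\equiv 0$ into the formula of Theorem \ref{FinMec2} and observe that every potential-dependent term drops out: the Hessian $\mathbf{Hess}_\tv U$ vanishes identically, and since $\nabla_\tv U=0$ both the Chern-curvature term $g_\tv(\mathbf{P}_\tv(\xi^h,\nabla_\tv U,\eta^h),\tv)$ and the quadratic term $\tfrac{3}{F(\tv)^2}g_\tv(\xi^h,\nabla_\tv U)g_\tv(\eta^h,\nabla_\tv U)$ vanish. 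What remains is
\[
g_\tv(({\mathfrak R}_\lambda\xi)^h,\eta^h)=g_\tv(\mathcal R(\xi^h,\tv)\tv,\eta^h).
\]

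The second step is a purely symmetric rewriting of the Riemann curvature term to match the stated form, namely showing
\[
g_\tv(\mathcal R(\xi^h,\tv)\tv,\eta^h)=g_\tv(\mathcal R(\tv,\xi^h)\eta^h,\tv).
\]
The directional antisymmetry $\mathcal R(\tv,\xi^h)=-\mathcal R(\xi^h,\tv)$ is manifest from \eqref{RieCoe}, since $R^{\ i}_{j\ kl}$ is antisymmetric in the two indices $k,l$ contracted against the first two vector arguments. Applying it to the right-hand side, the target identity reduces to the antisymmetry of $g_\tv\bigl(\mathcal R(\xi^h,\tv)\,\cdot\,,\cdot\bigr)$ in its last two slots,
\[
g_\tv(\mathcal R(\xi^h,\tv)\tv,\eta^h)=-\,g_\tv(\mathcal R(\xi^h,\tv)\eta^h,\tv).
\]

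The hard part will be justifying this last-pair antisymmetry in the genuinely Finslerian setting, where the Chern connection is only \emph{almost} compatible with $g_\tv$, so the Riemannian identity $g(\mathcal R(A,B)Z,W)=-g(\mathcal R(A,B)W,Z)$ does not hold verbatim. The correction measuring the failure of metric compatibility is built from the Cartan tensor $C_{ijk}$ and its covariant derivatives; the key observation is that in the expression at hand the reference vector $\tv=y^i\frac{\partial}{\partial x^i}$ occupies one slot of each inner pair, so every relevant Cartan-tensor contraction with $\tv$ is of the form killed by \eqref{cartan}, i.e. $C_{ijk}y^i=C_{ijk}y^j=C_{ijk}y^k=0$. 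Thus the correction terms drop out and the desired antisymmetry is recovered at the reference vector. I would verify this at the level of the coordinate expression \eqref{RieCoe}, exploiting $N^i_j=\Gamma^i_{jk}y^k$ from \eqref{N_Gamma} to carry out the $\tv$-contractions; alternatively one may simply invoke the standard symmetries of the Finsler Riemann curvature tensor evaluated with the reference vector in two of its slots (see \cite{Shen}). Combining the two steps yields exactly $g_\tv(({\mathfrak R}_\lambda\xi)^h,\eta^h)=g_\tv(\mathcal R(\tv,\xi^h)\eta^h,\tv)$, as claimed.
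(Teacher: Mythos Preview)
Your proposal is correct and matches the paper's (implicit) approach: the corollary is stated without a separate proof, being an immediate specialization of Theorem~\ref{FinMec2} at $U\equiv 0$ followed by the curvature symmetry you isolate. The only simplification you could make is that the ``hard part'' is already recorded verbatim in the paper: the chain of equalities in Lemma~\ref{main1} contains precisely $-g_{\tv}(\mathcal R(\tv,\w_1)\w_2,\tv)=g_{\tv}(\mathcal R(\tv,\w_1)\tv,\w_2)=-g_{\tv}(\mathcal R(\w_1,\tv)\tv,\w_2)$, and its proof opens with the remark that the sum $g_{\tv}(\mathcal R(\tv,\w_1)\tv,\w_2)+g_{\tv}(\mathcal R(\tv,\w_1)\w_2,\tv)$ carries a factor $C_{jbk}y^b$ which vanishes by \eqref{cartan}. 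So rather than re-deriving the last-pair antisymmetry from \eqref{RieCoe} and \eqref{N_Gamma}, you may simply cite Lemma~\ref{main1} (or the reference to \cite{BCS} given there) and be done.
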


\medskip
Now we turn to the study of Anosov flows on Finsler manifolds. In the present setting, the criteria for hyperbolicity and Anosov flows in \cite{AgCh} is written as the following

\begin{theor}\label{main3}
 Let $c$ be a positive constant. Let
$S$ be a compact invariant set of the flow $e^{t\vec  \ham}$ contained
in a fixed level of $\ham^{-1}(c)$. If the curvature satisfies that
$g_{\tv}(({\mathfrak R}_\lambda\xi)^h,\xi^h)<0,\ \forall \tv,\xi\in {\mathcal H}_c$
 at every point $\x$ of $S$, then $S$ is a hyperbolic set of
the flow $e^{t\vec \ham}|_{\ham^{-1}(c)}$.
\end{theor}%

Combining this theorem and Theorem \ref{FinMec2} we have

\begin{theor}\label{main4}
Assume that the flag curvature of a closed reversible Finsler manifold $(M,F)$ is bounded  from above by $k$.
If the constant $c$ satisfies that
\begin{eqnarray*}
&~&\max_{\tv\perp \w, F(\tv)=F(\w)=2(c-U)}\{{\mathbf{Hess_{\tv}}}\ U(\w,\w)
+g_{\tv}(\mathbf{P_{\tv}}\left(\w,\nabla_\tv U,\w\right),\tv)\\
 &+&\frac{3}{4(c-U)^2}g_{\tv}( \w,\nabla_\tv U)^2\}<-4k(c-U)^2,
\end{eqnarray*}
then the flow $e^{t\vec \ham}|_{\mathcal H_c}$
is an Anosov flow. 
\end{theor}

When specializing to a closed Riemannian manifold $(M,g)$, we have the following
\begin{cor}
Assume that the sectional curvature of $(M,g)$ is bounded  from above by $k$.
If the constant $c$ satisfies that
\begin{eqnarray*}
&~&\max_{\tv\perp \w, |\tv|=|\w|=1}{\frac{\mathbf{Hess}\ U(\w,\w)}{2(c-U)}
+\frac{3}{4(c-U)^2}g( \w,\nabla U)^2}<-k,
\end{eqnarray*}
then the flow $e^{t\vec \ham}|_{\mathcal H_c}$
is an Anosov flow. 
If denote by $\|\mathbf{Hess}\ U\|$ the operator norm of $\mathbf{Hess}\ U$ and $\|\nabla U\|$ the norm operator of $g(\nabla U,\cdot)$, then above condition can be written as
$$\max_{\x\in M}\{\frac{\|\mathbf{Hess}\ U\|}{2(c-U)}
+\frac{3}{4(c-U)^2}\|\nabla U\|^2\}<-k.$$
\end{cor}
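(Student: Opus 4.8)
The plan is to read off the corollary as the Riemannian specialization of Theorem~\ref{main4}, but I find it more transparent to feed Theorem~\ref{FinMec2} directly into the hyperbolicity criterion of Theorem~\ref{main3}. First I would record the simplifications that occur when $F$ comes from a Riemannian metric $g$: the Cartan tensor vanishes, so the Chern curvature $\mathbf{P}_{\tv}$ is identically zero and the term $g_{\tv}(\mathbf{P}_{\tv}(\cdot,\nabla_\tv U,\cdot),\tv)$ drops out; the fundamental tensor is $g_{\tv}=g$ independently of $\tv$, so $\mathbf{Hess}_{\tv}U=\mathbf{Hess}\,U$ and $\nabla_\tv U=\nabla U$ become the ordinary Levi--Civita Hessian and gradient; and the flag curvature $K(P,\tv)$ reduces to the sectional curvature $K(P)$, which by hypothesis satisfies $K(P)\le k$.

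Second, by Theorem~\ref{main3} it suffices to verify $g((\mathfrak{R}_\lambda\xi)^h,\xi^h)<0$ for every $\lambda\in\mathcal{H}_c$ and every nonzero $\xi\in\Pi_\lambda$. Putting $\eta=\xi$ and $\w=\xi^h$ in Theorem~\ref{FinMec2}, and using that the superscript-$^h$ map carries $\Pi_\lambda$ onto the $g$-orthogonal complement of $\tv$ (so that $\w\perp\tv$), I would invoke the flag-curvature identity from \eqref{FlaCur}, namely $g(\mathcal{R}(\w,\tv)\tv,\w)=K(P)\,(|\tv|^2|\w|^2-g(\tv,\w)^2)=K(P)\,|\tv|^2|\w|^2$ for $\w\perp\tv$, to obtain
\[
g((\mathfrak{R}_\lambda\xi)^h,\xi^h)=K(P)\,|\tv|^2|\w|^2+\mathbf{Hess}\,U(\w,\w)+\frac{3}{|\tv|^2}\,g(\w,\nabla U)^2.
\]
On the level set $\mathcal{H}_c$ one has $|\tv|^2=2(c-U)$, which is positive provided $c>\max_M U$; bounding $K(P)\le k$ then gives the estimate $K(P)|\tv|^2|\w|^2\le 2k(c-U)|\w|^2$.

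Third, the right-hand side is a quadratic form in $\w$, so its sign is unchanged by rescaling $\w$; normalizing $|\w|=1$, a sufficient condition for negativity is
\[
2k(c-U)+\mathbf{Hess}\,U(\w,\w)+\frac{3}{2(c-U)}\,g(\w,\nabla U)^2<0,\qquad |\w|=1,\ \w\perp\tv.
\]
Dividing by $2(c-U)>0$ and rearranging reproduces precisely the pointwise inequality of the corollary. Since $\tv$ ranges over all unit directions, every unit $\w$ arises orthogonal to some $\tv$, so the constraint $\w\perp\tv$ inside the maximum is harmless; taking the maximum over the compact $M$ makes the bound uniform, whence the whole level $\mathcal{H}_c$ is a compact hyperbolic set and the flow is Anosov. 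For the operator-norm form I would use $\max_{|\w|=1}\mathbf{Hess}\,U(\w,\w)\le\|\mathbf{Hess}\,U\|$ and $\max_{|\w|=1}g(\w,\nabla U)^2=\|\nabla U\|^2$, together with subadditivity of the maximum, to pass to the stated $\x$-pointwise bound.

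The step I expect to be the main obstacle is the careful bookkeeping of the two different roles played by $2(c-U)$: the energy constraint $|\tv|^2=2(c-U)$ is \emph{not} a free normalization, and it is exactly this substitution that converts the dimensionless curvature bound $k$ into the term $2k(c-U)$, whereas the length of $\w$ \emph{is} a free scaling that may be set to $1$; conflating the two produces spurious powers of $(c-U)$. A secondary point requiring justification is the identification of $\Pi_\lambda$ (under $^h$) with $\tv^{\perp}$, together with the fact that reversibility and closedness of $M$ guarantee that $\mathcal{H}_c$ is compact and invariant, which is what upgrades the conclusion from ``hyperbolic set'' to ``Anosov flow''.
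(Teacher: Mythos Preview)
Your proposal is correct and follows essentially the same route as the paper, which states the corollary as an immediate Riemannian specialization of Theorem~\ref{main4} (equivalently, of Theorem~\ref{FinMec2} combined with Theorem~\ref{main3}) without giving a separate proof. Your bookkeeping of the two roles of $2(c-U)$ and the identification of $\Pi_\lambda$ with $\tv^\perp$ via $^h$ are exactly the points that need to be checked, and you handle them correctly.
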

It follows immediately that the geodesic flows on a closed reversible Finsler manifold with negative flag curvature are of Anosov type.

\begin{remark}
There are partial results on the expressions of the curvatures for the least action problems of a natural mechanical system on a  contact sub-Riemannian manifold with transverse symmetries (see \cite{Li1}). And the hyperbolicity of the reduced Hamiltonian flows (reduced by the first integral from the transverse symmetries) for the least action problems of a natural mechanical system on a sub-Riemannian manifold with commutative transverse symmetries are discussed in \cite{Li2}.
\end{remark}

\section{Proofs of the main results}
In this section we show the proofs of Theorems \ref{CurFin} and \ref{FinMec2}.

\medskip
As before, let $\tv=y^i\frac{\partial}{\partial x_i}=\mathcal L^*(\p)$. Note that, for the reason of simplicity we will not write $\p$ and $\tv$ in the tensors. For example, we write $g_{ij}$ and $g^*_{ji}$ instead of $g_{ij}(\tv)$ and $g^*_{ji}(\p)$, respectively. However, one should understand that such tensors in general depend on $\p$ or $\tv$, which is the essential non-Riemannian phenomenon.

Note that  for the rest of the draft, we  consider $y^i$ as a function
of $p_1, ..., p_n$  via the Legendre transformation $\mathcal L^*$. For simplicity again, we write $y^i$ instead of $y^i(\p)$.

\subsection{Some  useful lemmas}

%
First of all, Theorem \ref{homo} implies
\begin{lemma}\label{der}
$\frac{\partial y^i}{\partial p_j}=g^*_{ij}.$
\end{lemma}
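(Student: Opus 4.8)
The plan is to avoid differentiating the bilinear expression $y^i=g^*_{ji}p_j$ directly, and instead first recognize $\tv$ as (half) the $p$-gradient of $(F^*)^2$, after which the lemma is immediate. First I would record the homogeneity bookkeeping: since $F^*$ is positively $1$-homogeneous on each $T^*_\x M\setminus\{0\}$, the square $(F^*)^2$ is positively $2$-homogeneous, and therefore each first partial $\partial((F^*)^2)/\partial p_i$ is positively $1$-homogeneous. Applying Theorem~\ref{homo} (Euler's relation) to this $1$-homogeneous function and using the definition $g^*_{ji}=\tfrac12\,\partial^2((F^*)^2)/\partial p_j\partial p_i$ gives
\[
\frac{\partial((F^*)^2)}{\partial p_i}
=\frac{\partial}{\partial p_j}\!\left(\frac{\partial((F^*)^2)}{\partial p_i}\right)p_j
=2\,g^*_{ji}\,p_j .
\]
Combining this with the identity $y^i=g^*_{ji}p_j$ from \eqref{identi}, I obtain the clean formula $y^i=\tfrac12\,\partial((F^*)^2)/\partial p_i$; that is, $\tv$ is the $p$-gradient of $\tfrac12(F^*)^2$.

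The second and final step is then a one-line differentiation of this formula in $p_j$:
\[
\frac{\partial y^i}{\partial p_j}
=\frac12\frac{\partial^2((F^*)^2)}{\partial p_i\partial p_j}
=g^*_{ij},
\]
where the last equality is just the definition of $g^*_{ij}$ (using its symmetry $g^*_{ij}=g^*_{ji}$). This completes the argument.

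The only delicate point — and the step I expect to be the sole (minor) obstacle — is the degree bookkeeping, namely checking that $\partial((F^*)^2)/\partial p_i$ is exactly $1$-homogeneous so that Euler's relation contributes the factor $r=1$ and the powers of $\tfrac12$ cancel as claimed. For a consistency check one may instead differentiate $y^i=g^*_{ki}p_k$ head-on, which yields $\partial y^i/\partial p_j=g^*_{ji}+(\partial g^*_{ki}/\partial p_j)\,p_k$; here the offending term vanishes because $\partial g^*_{ki}/\partial p_j=\tfrac12\,\partial^3((F^*)^2)/\partial p_k\partial p_i\partial p_j$ is totally symmetric in $k,i,j$, so it equals $\partial g^*_{ji}/\partial p_k$, and contracting the latter with $p_k$ is an Euler contraction of the $0$-homogeneous tensor $g^*_{ji}$, hence zero by Theorem~\ref{homo}. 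This is precisely the dual analogue of the Cartan identity \eqref{cartan}, and it recovers $\partial y^i/\partial p_j=g^*_{ji}=g^*_{ij}$.
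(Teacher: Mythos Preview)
Your proof is correct and takes essentially the same approach as the paper: the paper simply writes ``Theorem~\ref{homo} implies'' and states the lemma without further detail, so both arguments rest on Euler's relation applied to the homogeneity of $(F^*)^2$ and $g^*_{ij}$. You have merely spelled out carefully what the paper leaves implicit.
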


Further, let $\nabla^{\tv}_{\w}$ be the horizontal lift of $\w\in T_{\x}M$ via the Chern connection with the reference vector $\tv$. In local coordinates,
\begin{equation}\label{lift}
\nabla^{\tv}_{\frac{\partial}{\partial x^i}}=\frac{\partial}{\partial x^i}+\Gamma^k_{ij}p_k\frac{\partial}{\partial p_j}.
\end{equation}

Now we expression the Hamiltonian vector field $\vec\ham$ using the Chern connection. Note that it is also a  consequence of homogeneity on the fibres of $F^*$ (see \cite{AgGa} for this point).
\begin{lemma}\label{HamLif}
$\vec\ham(\p)=\nabla^{\tv}_{\tv}+\vec U$.
\end{lemma}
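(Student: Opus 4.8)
The plan is to compute $\vec\ham$ directly in the Darboux coordinates $(x^i,p_i)$, in which $\sigma=dx^i\wedge dp_i$ and hence
\[
\vec\ham=\frac{\partial\ham}{\partial p_i}\frac{\partial}{\partial x^i}-\frac{\partial\ham}{\partial x^i}\frac{\partial}{\partial p_i},
\]
and to match it term by term against the right-hand side expanded via \eqref{lift}. Writing $\nabla^{\tv}_{\tv}=y^i\,\nabla^{\tv}_{\frac{\partial}{\partial x^i}}=y^i\frac{\partial}{\partial x^i}+y^i\Gamma^k_{ij}p_k\frac{\partial}{\partial p_j}$, and noting that, since $U$ depends only on $\x$, the field $\vec U$ is purely vertical, $\vec U=-\frac{\partial U}{\partial x^i}\frac{\partial}{\partial p_i}$, the assertion splits into a horizontal identity and a vertical identity.

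For the horizontal part I would use the Legendre relation $F^*(\p)^2=\p(\tv)=p_jy^j$, so that by Lemma \ref{der} and \eqref{identi}, $\frac{\partial}{\partial p_i}\!\left(\tfrac12(F^*)^2\right)=\tfrac12\left(y^i+p_j g^*_{ji}\right)=y^i$. Since $U$ is independent of $\p$, this gives $\frac{\partial\ham}{\partial p_i}=y^i$, which matches the horizontal part $y^i\frac{\partial}{\partial x^i}$ of $\nabla^{\tv}_{\tv}$; the horizontal part of $\vec U$ vanishes, consistently. For the vertical part, using \eqref{N_Gamma} and the torsion-freeness of the Chern connection ($\Gamma^k_{ij}=\Gamma^k_{ji}$) I would rewrite the vertical part of $\nabla^{\tv}_{\tv}$ as $y^i\Gamma^k_{ij}p_k=N^k_j p_k$. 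Since the contribution of $U$ to $-\partial_{x^i}\ham$ is exactly $\vec U$, what remains is the purely geodesic identity
\[
-\frac{1}{2}\frac{\partial (F^*)^2}{\partial x^i}=N^k_i p_k .
\]
This is the heart of the lemma and the step I expect to be the main obstacle.

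To establish it I would proceed in two parallel reductions, each collapsing a Finsler expression to the metric derivative $\frac{\partial g_{kl}}{\partial x^i}y^ky^l$ by repeated use of the Cartan identity \eqref{cartan}. On the left, differentiating $(F^*)^2=p_jy^j$ at fixed $\p$ gives $\partial_{x^i}(F^*)^2=p_j\,\partial_{x^i}y^j$; implicitly differentiating the constraint $p_k=g_{kl}y^l$ from \eqref{identi}, the mixed term carries $\frac{\partial g_{kl}}{\partial y^m}y^l=2C_{klm}y^l$, which vanishes by \eqref{cartan}, leaving $\partial_{x^i}(F^*)^2=-\frac{\partial g_{kl}}{\partial x^i}y^ky^l$. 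On the right, I would expand $N^k_i p_k=\Gamma^k_{ij}y^j p_k$ using the definition of the Chern coefficients $\Gamma^i_{jk}$; after contracting with $p_k=g_{km}y^m$ and with $y^j$, every non-Riemannian correction term carries a Cartan tensor contracted with $\tv$ and hence is annihilated by \eqref{cartan}, so that $N^k_i p_k=\gamma^k_{ij}y^j p_k$. The standard Christoffel manipulation (two of the three metric-derivative terms cancel after relabeling) then reduces this to $\tfrac12\frac{\partial g_{kl}}{\partial x^i}y^ky^l$, which equals $-\tfrac12\partial_{x^i}(F^*)^2$, completing the proof.

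I expect the only delicate point to be the bookkeeping in this last reduction: tracking the explicit $\x$-dependence against the implicit $\tv(\p)$-dependence through the Legendre transform, and checking that each Cartan correction is indeed killed by \eqref{cartan}. Conceptually, the identity simply expresses the fact that $\tfrac12(F^*)^2$ generates the cogeodesic flow, whose horizontal lift is taken with respect to the Chern connection (compare \cite{AgGa, Le1}); the presence of $\vec U$ is then immediate from the linearity of the assignment $\ham\mapsto\vec\ham$.
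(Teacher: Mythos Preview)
Your proposal is correct and follows essentially the same route as the paper: compute $\vec\ham$ in Darboux coordinates, use \eqref{cartan} to collapse the Chern coefficients $\Gamma^k_{ij}$ to the formal Christoffel symbols $\gamma^k_{ij}$ in the vertical part of $\nabla^{\tv}_{\tv}$, and then verify the remaining Riemannian-type identity by the standard Christoffel manipulation. The only cosmetic difference is that for the vertical piece the paper writes $(F^*)^2=g^*_{ij}p_ip_j$ and uses the inverse-matrix relation $\partial_{x^k}g^*_{ij}\,g_{is}=-g^*_{ij}\,\partial_{x^k}g_{is}$, whereas you obtain the same reduction by implicitly differentiating the Legendre constraint $p_k=g_{kl}y^l$; both lead to $-\tfrac12\partial_{x^i}(F^*)^2=\tfrac12\partial_{x^i}g_{kl}\,y^ky^l$.
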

\begin{proof}
It follows from Lemma \ref{hamil} that $$\ham(\p)=\frac{1}{2}g^*_{ij}p_ip_j+U.$$
Hence,
\begin{equation*}
\begin{split}
\vec\ham(\p)&=\frac{\partial \ham}{\partial p_i}(\p)\frac{\partial}{\partial x^i}-\frac{\partial \ham}{\partial x^i}(\p)\frac{\partial}{\partial p_i}+\vec U\\
&=g^*_{ij}p_i\frac{\partial}{\partial x^j}+\frac{1}{2}\frac{\partial g^*_{ij}}{\partial p_k}p_ip_j\frac{\partial}{\partial x^k}-\frac{1}{2}\frac{\partial g^*_{ij}}{\partial x^k}p_ip_j\frac{\partial}{\partial p_k}+\vec U
\end{split}
\end{equation*}
As $g^*_{ij}$ is positively 0-homogeneous in $\p$, Theorem \ref{homo} implies
$$\frac{\partial g^*_{ij}}{\partial p_k}p_ip_j=0.$$
Therefore,
\begin{equation}\label{hamil2}
\vec\ham(\p)=g^*_{ij}p_i\frac{\partial}{\partial x^j}-\frac{1}{2}\frac{\partial g^*_{ij}}{\partial x^k}p_ip_j\frac{\partial}{\partial p_k}+\vec U.
\end{equation}

On the other hand, using \eqref{lift}, \eqref{cartan} and \eqref{identi} we have
\begin{equation*}
\begin{split}
\nabla^{\tv}_{\tv}&=y^i\frac{\partial}{\partial x^i}+\Gamma^k_{ij}p_ky^i\frac{\partial}{\partial p_j}\\
&=y^i\frac{\partial}{\partial x^i}+\left(\gamma^k_{ij}
-g^{kl}\Big(C_{ilm}N^m_j+C_{ljm}N^m_i-C_{ijm}N^m_l\Big)\right)p_ky^i\frac{\partial}{\partial p_j}\\
&=y^i\frac{\partial}{\partial x^i}+\gamma^k_{ij}p_ky^i\frac{\partial}{\partial p_j}-\Big(C_{ilm}N^m_j+C_{ljm}N^m_i-C_{ijm}N^m_l\Big)y^ly^i\frac{\partial}{\partial p_j}\\
&=y^i\frac{\partial}{\partial x^i}+\gamma^k_{ij}p_ky^i\frac{\partial}{\partial p_j}.
\end{split}
\end{equation*}

The rest are actually the one for Riemannian case. Indeed, combining Lemma \ref{der} with \eqref{chris} and \eqref{identi} we have
\begin{equation*}
\begin{split}
&(\nabla^{\tv}_{\tv}+\vec U)-\vec\ham(\p)=\gamma^k_{ij}p_ky^i\frac{\partial}{\partial p_j}+\frac{1}{2}\frac{\partial g^*_{ij}}{\partial x^k}p_ip_j\frac{\partial}{\partial p_k}\\
&=\frac{1}{2}\left\{\frac{\partial g_{il}}{\partial x^j}+\frac{\partial g_{lj}}{\partial x^i}-\frac{\partial g_{ij}}{\partial x^l}\right\}y^ly^i\frac{\partial}{\partial p_j}+\frac{1}{2}\frac{\partial g^*_{ij}}{\partial x^k}g_{is}y^sg_{jt}y^t\frac{\partial}{\partial p_k}\\
&=\frac{1}{2}\frac{\partial g_{il}}{\partial x^j}y^ly^i\frac{\partial}{\partial p_j}
+\frac{1}{2}\frac{\partial g^*_{ij}}{\partial x^k}g_{is}y^sg_{jt}y^t\frac{\partial}{\partial p_k}.
\end{split}
\end{equation*}
Using that $(g^*_{ij})$ is the inverse matrix of $(g_{ij})$,  we conclude
\begin{equation*}
\begin{split}
~&(\nabla^{\tv}_{\tv}+\vec U)-\vec\ham(\p)\\
&=\frac{1}{2}\frac{\partial g_{il}}{\partial x^j}y^ly^i\frac{\partial}{\partial p_j}
-\frac{1}{2}\frac{\partial g_{is}}{\partial x^k}g^*_{ij}y^sg_{jt}y^t\frac{\partial}{\partial p_k}\\
&=\frac{1}{2}\frac{\partial g_{il}}{\partial x^j}y^ly^i\frac{\partial}{\partial p_j}
-\frac{1}{2}\frac{\partial g_{is}}{\partial x^k}y^sy^i\frac{\partial}{\partial p_k}=0.
\end{split}
\end{equation*}
\end{proof}
It is time to introduce one more notation. For any $\w\in T_\x M$ there is a unique co-vector $\w^v$ defined by $\w^v=g_\tv(\w,\cdot)$.

The homogeneity on the fibres of $F^*$ also implies
\begin{lemma}\label{minus}
$[\nabla^{\tv}_{\tv},\tv^v]=-\nabla^{\tv}_{\tv},$
\end{lemma}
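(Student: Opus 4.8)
The plan is to compute the commutator $[\nabla^{\tv}_{\tv},\tv^v]$ directly in the local coordinates already set up, using the explicit formulas from the preceding lemmas. Recall from Lemma \ref{HamLif} (or rather its proof) that $\nabla^{\tv}_{\tv}=y^i\frac{\partial}{\partial x^i}+\gamma^k_{ij}p_ky^i\frac{\partial}{\partial p_j}$, and that $\tv^v=g_{\tv}(\tv,\cdot)$ is, as a co-vector identified with a vertical vector field on $T^*M$, governed by the identity $\p=g_{ij}y^j\,dx^i$ from \eqref{identi}; so I expect $\tv^v$ to be expressible as $p_j\frac{\partial}{\partial p_j}$, the Euler (radial) vector field on the fibres. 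The whole computation should then reduce to a bracket of a first-order operator with the radial vertical field.

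First I would write both operators explicitly: set $X=\nabla^{\tv}_{\tv}=y^i\frac{\partial}{\partial x^i}+\gamma^k_{ij}p_ky^i\frac{\partial}{\partial p_j}$ and $Y=\tv^v=p_\ell\frac{\partial}{\partial p_\ell}$. Then I would expand $[X,Y]=XY-YX$ term by term, being careful that the coefficients $y^i$, $\gamma^k_{ij}$, and $g_{ij}$ all depend on $\p$ through the Legendre transform, so the vertical derivatives $\frac{\partial}{\partial p_\ell}$ act nontrivially on them. The key homogeneity inputs are: $y^i(\p)$ is positively $1$-homogeneous in $\p$, so by Theorem \ref{homo} one has $\frac{\partial y^i}{\partial p_\ell}p_\ell=y^i$; and $\gamma^k_{ij}$, like $g_{ij}$, is positively $0$-homogeneous in the fibre variable, so the analogous contraction $\frac{\partial \gamma^k_{ij}}{\partial p_\ell}p_\ell=0$. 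These two facts, together with Lemma \ref{der} giving $\frac{\partial y^i}{\partial p_j}=g^*_{ij}$, are exactly what I expect to collapse the bracket.

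Carrying this out, the horizontal part $y^i\frac{\partial}{\partial x^i}$ of $X$ contributes, under the bracket with $Y$, a term $-y^i\frac{\partial}{\partial x^i}$ coming from $Y$ differentiating the coefficient $y^i$ (via $\frac{\partial y^i}{\partial p_\ell}p_\ell=y^i$) and no compensating contribution since $X$ differentiating the coefficient $p_\ell$ of $Y$ in the $x$-directions vanishes; this already produces one copy of $-X$ in the horizontal direction. The vertical part $\gamma^k_{ij}p_ky^i\frac{\partial}{\partial p_j}$ should, after using the $0$-homogeneity of $\gamma$ and the $1$-homogeneity of $y$ to track how $Y$ acts on the product $\gamma^k_{ij}p_ky^i$ together with the term where $X$ acts on $p_j$, yield $-\gamma^k_{ij}p_ky^i\frac{\partial}{\partial p_j}$, the negative of the vertical part of $X$. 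Assembling the horizontal and vertical pieces gives $[X,Y]=-X$, which is precisely the claim $[\nabla^{\tv}_{\tv},\tv^v]=-\nabla^{\tv}_{\tv}$.

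The main obstacle I anticipate is the bookkeeping in the vertical component: one must correctly count the cross terms where $Y=p_\ell\frac{\partial}{\partial p_\ell}$ differentiates each $\p$-dependent factor ($\gamma^k_{ij}$, $y^i$, and the explicit $p_k$) while $X$ simultaneously differentiates the coefficient $p_j$ of $Y$, and then verify that the homogeneity contractions make the unwanted terms cancel so that exactly $-1$ times the vertical part of $X$ survives. A clean way to avoid error is to first observe abstractly that $\tv^v$ generates (up to sign) the fibrewise dilation flow $\p\mapsto e^{-s}\p$, so that conjugating $X$ by this flow and differentiating at $s=0$ reduces the statement to the homogeneity degree of $X$: since $\nabla^{\tv}_{\tv}$ is built from $y^i$ (degree $1$ in $\p$) and is overall positively $1$-homogeneous as a vector field on $T^*M\setminus 0$, the Lie derivative along the radial field returns $-\nabla^{\tv}_{\tv}$. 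I would present the coordinate computation as the rigorous verification and mention the homogeneity-flow interpretation as the conceptual reason.
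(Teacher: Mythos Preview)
Your proposal is correct and follows essentially the same approach as the paper: a direct coordinate computation of the bracket with $\tv^v=p_\ell\frac{\partial}{\partial p_\ell}$, reduced via Theorem~\ref{homo}. The only cosmetic difference is that the paper uses the expression $\nabla^{\tv}_{\tv}=g^*_{ij}p_i\frac{\partial}{\partial x^j}-\tfrac{1}{2}\frac{\partial g^*_{ij}}{\partial x^k}p_ip_j\frac{\partial}{\partial p_k}$ (so only the $0$-homogeneity of $g^*_{ij}$ is invoked), whereas you use the equivalent form $y^i\frac{\partial}{\partial x^i}+\gamma^k_{ij}p_ky^i\frac{\partial}{\partial p_j}$ and track the $1$-homogeneity of $y^i$ together with the $0$-homogeneity of $\gamma^k_{ij}$.
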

\begin{proof}
Using Lemma \ref{HamLif} we obtain
$$\nabla^{\tv}_{\tv}=g^*_{ij}p_i\frac{\partial}{\partial x^j}-\frac{1}{2}\frac{\partial g^*_{ij}}{\partial x^k}p_ip_j\frac{\partial}{\partial p_k}.$$
Combining this identity with the identity $\tv^v=p_i\frac{\partial}{\partial p_i}$, we have
$$[\nabla^{\tv}_{\tv},\tv^v]+\nabla^{\tv}_{\tv}=\frac{1}{2}\frac{\partial^2 g^*_{ij}}{\partial p_a\partial x^k}p_a p_ip_j\frac{\partial}{\partial p_k} -\frac{\partial g^*_{ij}}{\partial p_a}p_a p_i\frac{\partial}{\partial x^j},$$
which vanishes by  Theorem \ref{homo}.
\end{proof}

The following lemma is useful when dealing the calculation of $\vec U$.
\begin{lemma}\label{CalW}
Let $U$ be the potential and $W_1,W_2$ be  vertical vector fields on $TM$, i.e. $\pi_*W_1=\pi_*W_2=0$.
Then
\begin{enumerate}
\item $\vec U=-(\nabla_\tv U)^v,$

\smallskip
\item $[\vec U,\tv^v]=\vec U,$

\smallskip
\item $g_{\tv}(W_1,W_2)=-\sigma(W_1^v,\nabla^\tv_{W_2}).$
\end{enumerate}
\end{lemma}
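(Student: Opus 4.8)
The plan is to establish all three identities by direct computation in the canonical coordinates $(x^i,p_i)$ on $T^*M$, using the Hamiltonian vector field formula $\vec f=\frac{\partial f}{\partial p_i}\frac{\partial}{\partial x^i}-\frac{\partial f}{\partial x^i}\frac{\partial}{\partial p_i}$, the coordinate expression \eqref{lift} of the horizontal lift, and the defining properties of the superscript $^v$ and of the gradient $\nabla_\tv$. Throughout I view $W_1,W_2$ as vectors (fields) in $T_\x M$, so that $W_1^v$ and the lift $\nabla^\tv_{W_2}$ are defined as in the preliminaries, and I use the identification $T_\lambda T^*_\x M\cong T^*_\x M$ sending a co-vector $\alpha_i\,dx^i$ to the vertical field $\alpha_i\frac{\partial}{\partial p_i}$.

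For (1), since the potential depends only on the base point, $\vec U$ has no horizontal component and reduces to the purely vertical field $-\frac{\partial U}{\partial x^i}\frac{\partial}{\partial p_i}$. On the other hand, the gradient satisfies $g_\tv(\nabla_\tv U,\frac{\partial}{\partial x^i})=\frac{\partial U}{\partial x^i}$ by definition, so the co-vector $(\nabla_\tv U)^v=g_\tv(\nabla_\tv U,\cdot)$ corresponds under the above identification to $\frac{\partial U}{\partial x^i}\frac{\partial}{\partial p_i}$; comparing the two expressions gives $\vec U=-(\nabla_\tv U)^v$ at once. For (2), I would combine this formula for $\vec U$ with the identity $\tv^v=p_j\frac{\partial}{\partial p_j}$ already recorded in the proof of Lemma \ref{minus} and compute $[\vec U,\tv^v]$. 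Because $\frac{\partial U}{\partial x^i}$ is a function of $x$ alone while $p_j$ is a function of $p$ alone, the only surviving contribution comes from $\frac{\partial}{\partial p_i}(p_j)=\delta_{ij}$, which returns precisely $-\frac{\partial U}{\partial x^i}\frac{\partial}{\partial p_i}=\vec U$.

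For (3), writing $W_1=W_1^i\frac{\partial}{\partial x^i}$ and $W_2=W_2^l\frac{\partial}{\partial x^l}$, the field $W_1^v=g_{ij}W_1^i\frac{\partial}{\partial p_j}$ is purely vertical, whereas by \eqref{lift} the lift $\nabla^\tv_{W_2}=W_2^l\frac{\partial}{\partial x^l}+W_2^l\Gamma^k_{lm}p_k\frac{\partial}{\partial p_m}$ has horizontal part $W_2^l\frac{\partial}{\partial x^l}$. Evaluating $\sigma=dx^i\wedge dp_i$ on the pair $(W_1^v,\nabla^\tv_{W_2})$, the vertical--vertical term drops out and only the cross term between the horizontal part of $\nabla^\tv_{W_2}$ and the vertical $W_1^v$ survives, giving $-g_{ij}W_1^iW_2^j=-g_\tv(W_1,W_2)$, which is the asserted identity.

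The computations are entirely elementary, so there is no genuine analytic difficulty; the one thing to handle with care is the bookkeeping, namely the consistent identification of vertical tangent vectors on $T^*M$ with co-vectors on $T_\x M$ (the meaning of the superscript $^v$), together with the sign conventions built into $\sigma=dx^i\wedge dp_i$ and into $\vec f$. The only mildly nontrivial step is (3), where one must notice that $\sigma$ pairs base directions against fibre directions, so that the vertical--vertical contribution vanishes and it is exactly the surviving horizontal--vertical cross term that reproduces $g_\tv(W_1,W_2)$ with the correct sign.
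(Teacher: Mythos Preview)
Your proposal is correct and follows essentially the same route as the paper: all three identities are verified by direct coordinate computation in $(x^i,p_i)$, using $\vec U=-\frac{\partial U}{\partial x^i}\frac{\partial}{\partial p_i}$, $\tv^v=p_j\frac{\partial}{\partial p_j}$, and the expression \eqref{lift} for the horizontal lift. The only cosmetic difference is that for item (3) the paper reduces by linearity to $W_1=\frac{\partial}{\partial x^i}$, $W_2=\frac{\partial}{\partial x^j}$, whereas you carry the general coefficients $W_1^i,W_2^l$ through; the computation and the sign analysis are the same.
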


\begin{proof} The first and second assertions are verified by straightforward computations.
\begin{equation*}
\begin{split}
(\nabla_\tv U)^v&=(\frac{\partial U}{\partial x^i}g^*_{ij}\frac{\partial}{\partial x^j})^v=\frac{\partial U}{\partial x^i}g^*_{ij}g_{jk}\frac{\partial}{\partial p_k}=\frac{\partial U}{\partial x^i}\frac{\partial}{\partial p_i}=-\vec U.\\
 [\vec U,\tv^v]&=[-\frac{\partial U}{\partial x^i}\frac{\partial}{\partial p_i},p_j\frac{\partial}{\partial p_j}]=-\frac{\partial U}{\partial x^i}\frac{\partial}{\partial p_i}=\vec U.
\end{split}
\end{equation*}

For the third assertion, by linearity we can assume $W_1=\frac{\partial}{\partial x^i},\ W_2=\frac{\partial}{\partial x^j}$. Then
$$W_1^v=g_{ik}\frac{\partial}{\partial p_k},\ \nabla^{\tv}_{W_2}=\frac{\partial}{\partial x^j}+\Gamma^k_{jl}p_k\frac{\partial}{\partial p_l}.$$
Hence,
$$g_{\tv}(W_1,W_2)=g_{ij}=-\sigma(W_1^v,\nabla^{\tv}_{W_2}).$$

\end{proof}

The following two lemmas provide a coordinate-free method when calculating the curvatures.
\begin{lemma}\label{main1}
It holds the following identities.
\begin{equation*}
\begin{split}
~&\sigma([\nabla^{\tv}_{\tv},\nabla^{\tv}_{\w_1}],\nabla^{\tv}_{\w_2})=-g_{\tv}(\mathcal R(\tv,\w_1)\w_2,\tv)\\
&=g_{\tv}(\mathcal R(\tv,\w_1)\tv,\w_2)=-g_{\tv}(\mathcal R(\w_1,\tv)\tv,\w_2).
\end{split}
\end{equation*}
\end{lemma}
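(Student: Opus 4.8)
The plan is to turn the identity into a coordinate computation organized around the horizontal lift $\delta_i:=\nabla^{\tv}_{\frac{\partial}{\partial x^i}}=\frac{\partial}{\partial x^i}+\Gamma^k_{ij}p_k\frac{\partial}{\partial p_j}$ of \eqref{lift}. By linearity I may take $\w_1,\w_2$ to be coordinate fields, so that, writing $\tv=y^i\frac{\partial}{\partial x^i}$, $\w_1=w_1^a\frac{\partial}{\partial x^a}$, $\w_2=w_2^b\frac{\partial}{\partial x^b}$, one has $\nabla^{\tv}_{\tv}=y^i\delta_i$, $\nabla^{\tv}_{\w_1}=w_1^a\delta_a$, $\nabla^{\tv}_{\w_2}=w_2^b\delta_b$, whence
\[
[\nabla^{\tv}_{\tv},\nabla^{\tv}_{\w_1}]=y^iw_1^a[\delta_i,\delta_a]-w_1^a(\delta_a y^i)\delta_i .
\]
Two structural facts then collapse the left-hand side. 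First, a direct check with $\sigma=dx^i\wedge dp_i$ and the symmetry $\Gamma^k_{ij}=\Gamma^k_{ji}$ gives $\sigma(\delta_i,\delta_b)=0$, so the horizontal second term contributes nothing to $\sigma(\,\cdot\,,\nabla^{\tv}_{\w_2})$. Second, a short computation shows that $[\delta_i,\delta_a]$ is purely vertical, say $[\delta_i,\delta_a]=C^n_{ia}\frac{\partial}{\partial p_n}$, where $C^n_{ia}$ gathers the $x$- and $p$-derivatives of the coefficients $\Gamma^m_{an}p_m$. Using $\sigma(\frac{\partial}{\partial p_n},\delta_b)=-\delta_{nb}$, the whole left-hand side becomes $-y^iw_1^aw_2^bC^b_{ia}$, and since $g_{im}y^m=p_i$ by \eqref{identi} the target is $-p_cR^{\ c}_{b\ ia}y^iw_1^aw_2^b$; thus it remains to prove $y^iC^b_{ia}=p_cR^{\ c}_{b\ ia}y^i$.

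Next I would match $y^iC^b_{ia}$ against the curvature coefficients \eqref{RieCoe} piece by piece. The two $\frac{\partial\Gamma}{\partial x}$ pieces of $C$ reproduce the first two terms of \eqref{RieCoe} after the symmetry $\Gamma^i_{jk}=\Gamma^i_{kj}$. The pieces in which a vertical derivative $\frac{\partial}{\partial p_l}$ hits the explicit momentum $p_m$ produce $\Gamma\cdot\Gamma$ contractions; contracting the free factor of $\tv$ and using $\Gamma^t_{il}y^i=N^t_l$ and $\Gamma^m_{jk}y^k=N^m_j$ from \eqref{N_Gamma}, these become exactly the two $\Gamma\Gamma$ terms of \eqref{RieCoe}. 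In the Riemannian limit (vanishing Cartan tensor, $y$-independent $\Gamma$) these four pieces already exhaust both sides, which is a useful sanity check.

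The hard part is the remaining pieces, where $\frac{\partial}{\partial p_l}$ and $\frac{\partial}{\partial x^i}$ differentiate the connection coefficients $\Gamma^m_{an}(\tv)$ themselves. The essential subtlety is that the reference vector $\tv=\mathcal L^*(\p)$ depends on the base point as well, since $y^i=g^*_{ji}p_j$ and $g^*_{ji}$ is a function of $(x,\p)$; hence each such derivative carries chain-rule contributions through $\frac{\partial y^s}{\partial p_l}=g^*_{sl}$ (Lemma \ref{der}) and through $\frac{\partial y^s}{\partial x^i}$. I expect the bookkeeping here to be the main obstacle: one must show that all these $\frac{\partial\Gamma}{\partial y}$-type terms reorganize precisely into the non-Riemannian contributions $\frac{\partial\Gamma^i_{jk}}{\partial y^m}N^m_l-\frac{\partial\Gamma^i_{jl}}{\partial y^m}N^m_k$ of \eqref{RieCoe}. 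This is where the Finsler-specific identities enter: the $0$-homogeneity of $\Gamma^i_{jk}$ in $\tv$ (Theorem \ref{homo}), the Cartan identities $C_{ijk}y^i=0$ of \eqref{cartan} — which let me replace $\Gamma$ by the formal Christoffel symbols $\gamma$ after contraction with $\tv$, as in the proof of Lemma \ref{HamLif} — and the almost-metric-compatibility relation $\Gamma_{r,uk}+\Gamma_{u,rk}=\frac{\partial g_{ru}}{\partial x^k}-2C_{rum}N^m_k$ of the Chern connection (with $\Gamma_{r,uk}=g_{rm}\Gamma^m_{uk}$); because $\tv$ enters twice, the spurious $\frac{\partial g}{\partial x}$ terms produced by the base-point dependence of $\tv$ cancel against those chain-rule contributions, leaving only the Riemann curvature. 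Finally, once $\sigma([\nabla^{\tv}_{\tv},\nabla^{\tv}_{\w_1}],\nabla^{\tv}_{\w_2})=-g_{\tv}(\mathcal R(\tv,\w_1)\w_2,\tv)$ is established, the remaining two equalities follow from the symmetries of the Finsler Riemann tensor with repeated reference vector: antisymmetry of $g_{\tv}(\mathcal R(\tv,\w_1)\cdot,\cdot)$ in its last two slots yields $g_{\tv}(\mathcal R(\tv,\w_1)\tv,\w_2)$, and $\mathcal R(\tv,\w_1)=-\mathcal R(\w_1,\tv)$ yields the last expression.
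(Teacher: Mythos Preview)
Your plan for the first identity is essentially the paper's own approach: reduce by bilinearity to coordinate fields, use that the horizontal (Chern) lift is Lagrangian so the $(\delta_a y^i)\delta_i$ term drops, observe that $[\delta_i,\delta_a]$ is purely vertical, expand it, and then match against the coefficients \eqref{RieCoe}, with the non-Riemannian leftover killed by the almost-metric-compatibility of the Chern connection together with \eqref{cartan} and \eqref{N_Gamma}. The paper organizes this as \eqref{cal1}--\eqref{cal5}, but the content is the same as what you outline.

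There is, however, a genuine gap in your last paragraph. You deduce the equality
\[
-g_{\tv}(\mathcal R(\tv,\w_1)\w_2,\tv)=g_{\tv}(\mathcal R(\tv,\w_1)\tv,\w_2)
\]
by invoking ``antisymmetry of $g_{\tv}(\mathcal R(\tv,\w_1)\cdot,\cdot)$ in its last two slots''. In Finsler geometry this antisymmetry \emph{fails} in general: the lowered Chern--Riemann tensor $R_{jikl}=g_{jm}R^{\ m}_{i\ kl}$ is not skew in $(j,i)$, unlike in the Riemannian case. What is true here is the weaker statement that the specific sum
\[
g_{\tv}(\mathcal R(\tv,\w_1)\tv,\w_2)+g_{\tv}(\mathcal R(\tv,\w_1)\w_2,\tv)
\]
carries a factor $C_{jbk}y^b$ and hence vanishes by \eqref{cartan} (cf.\ identity (3.4.4) in \cite{BCS}); the point is that one of the two last slots must itself be the reference vector $\tv$. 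The paper makes exactly this remark. You already use the Cartan identities in the body of the computation, so the fix is local: replace the appeal to a general skew-symmetry by this Cartan-tensor argument. Your final equality, via $\mathcal R(\tv,\w_1)=-\mathcal R(\w_1,\tv)$, is correct as stated.
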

\begin{proof}
First of all, we remark that the last identity is from the antisymmetry of the Riemannian curvature in Finsler geometry w.r.t. the vectors $\tv,\w_1$ while
the second identity doesn't follow from the antisymmetry of the Riemannian curvature w.r.t. the vectors $\tv,\w_2$. Actually the latter antisymmetry  in general doesn't hold in Finsler geometry in contrast with the Riemannian geometry case. Rather, we can show that the sum
$g_{\tv}(\mathcal R(\tv,\w_1)\tv,\w_2)+g_{\tv}(\mathcal R(\tv,\w_1)\w_2,\tv)$
\smallskip
has a factor $C_{jbk}y^b$ (see, for instance, identity (3.4.4) in \cite{BCS}). This factor vanishes from \eqref{cartan}.

\medskip
It remains to prove the first identity.
Since both sides are linear in $\w_1$ and $\w_2$, it suffices to show the identity for $\w_1=\frac{\partial}{\partial{x^i}},\ \w_2=\frac{\partial}{\partial{x^j}}$.

\smallskip
The horizontal  lift w.r.t. Chern connection is Lagrangian(equivalent to the torsion freeness of Chern connection), i.e.
$$\sigma(\nabla^{\tv}_{\tv_1},\nabla^{\tv}_{\tv_2})=0,\, \forall \tv_1,\tv_2\in TM.$$
It follows
\begin{equation}\label{cal1}
\begin{split}
&~\sigma([\nabla^{\tv}_{\tv},\nabla^{\tv}_{\frac{\partial}{\partial x^i}}],\nabla^{\tv}_{\frac{\partial}{\partial x^j}})\\
&=\sigma\left(\left[y^a(\frac{\partial}{\partial x^a}+\Gamma^c_{ab}p_c\frac{\partial}{\partial p_b}),
\frac{\partial}{\partial x^i}+\Gamma^k_{il}p_k\frac{\partial}{\partial p_l}\right],\nabla^{\tv}_{\frac{\partial}{\partial x^j}}\right)\\
&=y^a\sigma\left(\left[\frac{\partial}{\partial x^a}+\Gamma^c_{ab}p_c\frac{\partial}{\partial p_b},
\frac{\partial}{\partial x^i}+\Gamma^k_{il}p_k\frac{\partial}{\partial p_l}\right],\nabla^{\tv}_{\frac{\partial}{\partial x^j}}\right).
\end{split}
\end{equation}

Recall that $y^a=g^*_{ab}p_b$, which gives
$$\frac{\partial{y^a}}{\partial{x^c}}=\frac{\partial{g^*_{ab}}}{\partial x^c}p_b.$$
And note that $\Gamma_{ij}^k$ depends not only on $\x$ but also on $\tv$.
Now we can do the following straightforward calculations.
\begin{eqnarray*}
&~&\left[\frac{\partial}{\partial x^a}+\Gamma^c_{ab}p_c\frac{\partial}{\partial p_b},
\frac{\partial}{\partial x^i}+\Gamma^k_{il}p_k\frac{\partial}{\partial p_l}\right]\\
&=&\frac{\partial \Gamma^k_{il}}{\partial x^a}p_k\frac{\partial}{\partial p_l}+\frac{\partial \Gamma^k_{il}}{\partial y^b}\frac{\partial{g^*_{bc}}}{\partial x^a}p_cp_k\frac{\partial}{\partial p_l}+\frac{\partial \Gamma^k_{il}}{\partial p_b}\Gamma^c_{ab}p_cp_k\frac{\partial}{\partial p_l}\\
&+&\Gamma^c_{ab}\Gamma^b_{il}p_c\frac{\partial}{\partial p_l}-\frac{\partial \Gamma^c_{ab}}{\partial x^i}p_c\frac{\partial}{\partial p_b}-\frac{\partial \Gamma^c_{ab}}{\partial y^l}\frac{\partial{g^*_{lk}}}{\partial x^i}p_kp_c\frac{\partial}{\partial p_b}\\
&-&\frac{\partial \Gamma^c_{ab}}{\partial p_l}\Gamma^k_{il}p_kp_c\frac{\partial}{\partial p_b}-\Gamma^l_{ab}\Gamma^k_{il}p_k\frac{\partial}{\partial p_b}.
\end{eqnarray*}
Then also straightforward calculation again gives
\begin{equation}\label{cal2}
\begin{split}
&~\sigma\left(\left[\frac{\partial}{\partial x^a}+\Gamma^c_{ab}p_c\frac{\partial}{\partial p_b},
\frac{\partial}{\partial x^i}+\Gamma^k_{il}p_k\frac{\partial}{\partial p_l}\right],\nabla^{\tv}_{\frac{\partial}{\partial x^j}}\right)\\
&=-\frac{\partial \Gamma^k_{ij}}{\partial x^a}p_k-\frac{\partial\Gamma^k_{ij}}{\partial y^b}\frac{\partial{g^*_{bc}}}{\partial x^a}p_cp_k-\frac{\partial\Gamma^k_{ij}}{\partial p_b}\Gamma^c_{ab}p_cp_k\\
&\quad-\Gamma^c_{ab}\Gamma^b_{ij}p_c+\frac{\partial\Gamma^c_{aj}}{\partial x^i}p_c+\frac{\partial\Gamma^c_{aj}}{\partial y^l}\frac{\partial{g^*_{lk}}}{\partial x^i}p_cp_k\\
&\quad+\frac{\partial\Gamma^c_{aj}}{\partial p_l}\Gamma^k_{il}p_kp_c+\Gamma^l_{aj}\Gamma^k_{il}p_k.
\end{split}
\end{equation}
On the other hand, from \eqref{RieCoe} and \eqref{N_Gamma} we have
\begin{equation*}
\begin{split}
&~g_{\tv}(\mathcal R(\tv,\frac{\partial}{\partial x^i})\frac{\partial}{\partial x^j},\tv)=\\
&y^ap_b\left(\frac{\partial \Gamma^b_{ji}}{\partial x^a}-\frac{\partial \Gamma^b_{ja}}{\partial x^i}+\frac{\partial \Gamma^b_{ja}}{\partial y^t}\Gamma_{ik}^ty^k-\frac{\partial \Gamma^b_{ji}}{\partial y^t}\Gamma_{ak}^ty^k+\Gamma_{as}^b\Gamma_{ji}^s-\Gamma_{ja}^s\Gamma_{is}^b\right).
\end{split}
\end{equation*}
Using this identity and \eqref{cal1},\eqref{cal2} we obtain
\begin{equation}\label{cal3}
\begin{split}
&~\sigma([\nabla^{\tv}_{\tv},\nabla^{\tv}_{\frac{\partial}{\partial x^i}}],\nabla^{\tv}_{\frac{\partial}{\partial x^j}})+g_{\tv}\left(\mathcal R(\frac{\partial}{\partial x^j},\tv)\tv,\frac{\partial}{\partial x^i}\right)\\
&=-\frac{\partial \Gamma^k_{ij}}{\partial y^b}\frac{\partial{g^*_{bc}}}{\partial x^a}p_cp_ky^a-\frac{\partial \Gamma^k_{ij}}{\partial p_b}\Gamma^c_{ab}p_cp_ky^a+\frac{\partial \Gamma^c_{aj}}{\partial y^l}p_kp_cy^a\frac{\partial{g^*_{lk}}}{\partial x^i}\\
&+\frac{\partial \Gamma^c_{aj}}{\partial p_l}\Gamma^k_{il}p_kp_cy^a-\frac{\partial \Gamma^b_{ji}}{\partial y^t}\Gamma_{ak}^tp_b y^a y^k
+\frac{\partial \Gamma^b_{ja}}{\partial y^t}\Gamma_{ik}^tp_b y^a y^k
\end{split}
\end{equation}

The last goal is to show the right-hand side of the last identity vanishes. Indeed, since Chern connection is almost compatible with the Finsler metric(see e.g. \cite{Shen}),
\begin{equation}\label{cal4}
\begin{split}
&~\frac{\partial{g^*_{bc}}}{\partial x^a}p_c=\frac{\partial{g^*_{bc}}}{\partial x^a}g_{cs}y^s=-\frac{\partial{g_{cs}}}{\partial x^a}g^*_{bc}y^s\\
&=-g^*_{bc}y^s\left(\Gamma_{ac}^tg_{ts}+\Gamma_{as}^tg_{tc}+C_{cts}\Gamma_{ac}^ty^c\right)\\
&=-\Gamma_{ac}^tg^*_{bc}p_t-\Gamma_{as}^by^s.
\end{split}
\end{equation}
Similarly,
\begin{equation}\label{cal5}
\frac{\partial{g^*_{lk}}}{\partial x^i}p_k=-\Gamma_{ik}^tg^*_{lk}p_t-\Gamma_{is}^ly^s.
\end{equation}

Using \eqref{cal4},\eqref{cal5} we finally verify by a straightforward calculation that the right-hand side of \eqref{cal3} vanishes, as claimed.
\end{proof}

\begin{lemma}\label{main2}
Denote by $\mathbf{Hess}_{\tv}$ the Hessian w.r.t. the Riemannian metric $g_\tv$ and by $\mathbf{P}$ the Chern curvature.
\begin{equation*}
\sigma([\vec
U,\nabla^{\tv}_{\w_1}],\nabla^{\tv}_{\w_2})=-{\mathbf{
Hess_{\tv}}}\ U(\w_1,\w_2)-g_{\tv}\left(\mathbf{P_{\tv}}\left(\w_1,\nabla_\tv U,\w_2\right),\tv\right).
\end{equation*}
\end{lemma}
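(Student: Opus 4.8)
The plan is to reproduce, for the much simpler vector field $\vec U$, the coordinate computation used in the proof of Lemma~\ref{main1}. Since both sides of the claimed identity are pointwise bilinear in $\w_1$ and $\w_2$, it suffices to verify it for the coordinate fields $\w_1=\frac{\partial}{\partial x^i}$ and $\w_2=\frac{\partial}{\partial x^j}$, extended with constant coefficients. By Lemma~\ref{CalW}(1) we have $\vec U=-\frac{\partial U}{\partial x^a}\frac{\partial}{\partial p_a}$, and by \eqref{lift} we have $\nabla^{\tv}_{\frac{\partial}{\partial x^i}}=\frac{\partial}{\partial x^i}+\Gamma^k_{il}p_k\frac{\partial}{\partial p_l}$. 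First I would compute the Lie bracket $[\vec U,\nabla^{\tv}_{\frac{\partial}{\partial x^i}}]$. Because $\vec U$ is vertical with coefficients depending only on $\x$, the $\frac{\partial}{\partial x}$-component of the bracket vanishes, so the bracket is again vertical; its $p_c$-component is $\frac{\partial^2 U}{\partial x^i\partial x^c}-\frac{\partial U}{\partial x^k}\Gamma^k_{ic}-\frac{\partial U}{\partial x^a}g^*_{ba}\frac{\partial\Gamma^k_{ic}}{\partial y^b}p_k$.

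The crucial point in obtaining this is that $\Gamma^k_{il}$ depends on the reference vector $\tv$, hence on $\p$ through the Legendre transform; applying the $\frac{\partial}{\partial p_a}$ coming from $\vec U$ therefore produces, by the chain rule together with Lemma~\ref{der} (i.e. $\frac{\partial y^b}{\partial p_a}=g^*_{ba}$), precisely the last term $-\frac{\partial U}{\partial x^a}g^*_{ba}\frac{\partial\Gamma^k_{ic}}{\partial y^b}p_k$. The middle term $-\frac{\partial U}{\partial x^k}\Gamma^k_{ic}$ comes from differentiating the factor $p_k$, while the first term $\frac{\partial^2 U}{\partial x^i\partial x^c}$ comes from the horizontal part of $\nabla^{\tv}_{\frac{\partial}{\partial x^i}}$ acting on the $\x$-dependence of $\vec U$. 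Next I would pair with $\nabla^{\tv}_{\frac{\partial}{\partial x^j}}$ via $\sigma=dx^d\wedge dp_d$: since the bracket is vertical, $\sigma$ picks out exactly its $p_j$-component against the $dx^j$ carried by the horizontal part of $\nabla^{\tv}_{\frac{\partial}{\partial x^j}}$, giving $\sigma([\vec U,\nabla^{\tv}_{\frac{\partial}{\partial x^i}}],\nabla^{\tv}_{\frac{\partial}{\partial x^j}})=-\frac{\partial^2 U}{\partial x^i\partial x^j}+\Gamma^k_{ij}\frac{\partial U}{\partial x^k}+\frac{\partial U}{\partial x^a}g^*_{ba}\frac{\partial\Gamma^k_{ij}}{\partial y^b}p_k$.

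It then remains to identify these three terms with the right-hand side. The first two combine into $-\mathbf{Hess_{\tv}}\,U(\frac{\partial}{\partial x^i},\frac{\partial}{\partial x^j})$, recognizing that for a function $U=U(\x)$ the covariant Hessian with respect to the Chern connection is $\frac{\partial^2 U}{\partial x^i\partial x^j}-\Gamma^k_{ij}\frac{\partial U}{\partial x^k}$. The remaining term $\frac{\partial U}{\partial x^a}g^*_{ba}\frac{\partial\Gamma^k_{ij}}{\partial y^b}p_k$ should match $-g_{\tv}(\mathbf{P_{\tv}}(\frac{\partial}{\partial x^i},\nabla_\tv U,\frac{\partial}{\partial x^j}),\tv)$: inserting $\mathbf P^{\ k}_{j\ il}=-\frac{\partial\Gamma^k_{ji}}{\partial y^l}$, the gradient components $(\nabla_\tv U)^l=g^*_{lm}\frac{\partial U}{\partial x^m}$, and the relation $p_k=g_{kt}y^t$ from \eqref{identi}, this pairing becomes $\frac{\partial\Gamma^k_{ji}}{\partial y^l}g^*_{lm}\frac{\partial U}{\partial x^m}p_k$, which agrees with our term after using torsion-freeness $\Gamma^k_{ij}=\Gamma^k_{ji}$. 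The main obstacle is exactly the non-Riemannian phenomenon that $\Gamma$ depends on $\tv$: the entire Chern curvature contribution on the right-hand side arises from the single chain-rule term produced when $\frac{\partial}{\partial p_a}$ hits this $\p$-dependence, and the care lies in tracking that term and recognizing it as $\mathbf{P_{\tv}}$ contracted against $\tv$. (Alternatively, since the bracket is vertical one could write it as $\w^v$ for a suitable $\w$ and invoke Lemma~\ref{CalW}(3) to organize the pairing, but the direct computation above is cleanest.)
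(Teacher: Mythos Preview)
Your proposal is correct and follows essentially the same route as the paper: reduce by bilinearity to $\w_1=\partial/\partial x^i$, $\w_2=\partial/\partial x^j$, compute the (vertical) bracket $[\vec U,\nabla^{\tv}_{\partial/\partial x^i}]$ in coordinates, pair with $\nabla^{\tv}_{\partial/\partial x^j}$ via $\sigma$, and then identify the Hessian and Chern-curvature pieces. The only cosmetic difference is that the paper writes the extra term as $\frac{\partial\Gamma^l_{ij}}{\partial p_k}\frac{\partial U}{\partial x^k}p_l$ while you expand $\frac{\partial}{\partial p_k}$ via Lemma~\ref{der}, which is exactly the same thing.
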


\begin{proof}
Again, it suffices to prove the case when $\w_1=\frac{\partial}{\partial x^i},\w_2=\frac{\partial}{\partial x^j}$. A straightforward calculation shows
\begin{equation}\label{cal6}
\sigma\left([\vec
U,\nabla^{\tv}_{\frac{\partial}{\partial x^i}}],\nabla^{\tv}_{\frac{\partial}{\partial x^j}}\right)=-\frac{\partial^2U}{\partial x^i\partial x^j}+\frac{\partial U}{\partial x^k}\Gamma_{ij}^k+\frac{\partial \Gamma_{ij}^l}{\partial p_k}\frac{\partial U}{\partial x^k}p_l.
\end{equation}

On the other hand, we have
\begin{equation}\label{cal7}
\mathbf{Hess}_{\tv}\ U\left(\frac{\partial}{\partial x^i},\frac{\partial}{\partial x^j}\right)=\frac{\partial^2U}{\partial x^i\partial x^j}-\frac{\partial U}{\partial x^k}\Gamma_{ij}^k.
\end{equation}

Next, since $\nabla_\tv U=\frac{\partial U}{\partial x^a}g^*_{ab}\frac{\partial}{\partial x^b}$, it follows (see e.g. \cite{Shen}) that
\begin{equation*}
g_{\tv}\left(\mathbf{P_{\tv}}\left(\frac{\partial}{\partial x^i},\nabla_\tv U,\frac{\partial}{\partial x^j}\right),\tv\right)=-\frac{\partial \Gamma_{ji}^l}{\partial y^k}y^bg^*_{ak}g_{lb}\frac{\partial U}{\partial x^a}=-\frac{\partial \Gamma_{ji}^l}{\partial p_k}p_l\frac{\partial U}{\partial x^k}.
\end{equation*}

Combining this identity with \eqref{cal6},\eqref{cal7} we complete the proof of the lemma.
\end{proof}

\subsection{Canonical complements}
 Recall that
there is a canonical splitting
  \begin{equation}\label{splitting1}
 W_{\lambda}=\Pi_\lambda\oplus \mathfrak J^{trans}_\lambda,
  \end{equation}
   where 
   $\mathfrak J^{trans}_\lambda$ is the canonical complement.
A similar situation happens for the non-reduced case. Namely, it holds the following  canonical splitting
 \begin{equation}\label{splitting2}
 T_\lambda T^* M=T_\lambda T^*_\x M\oplus\bar{\mathfrak J}_\lambda^{trans},
  \end{equation}
   where 
   $\bar {\mathfrak J}^{trans}\lambda$ is the canonical complement.
In other words, $\mathfrak J^{trans}_\lambda,\bar{\mathfrak J}^{trans}_\lambda$ are actually
(nonlinear) Ehresmann connections of $\Pi_\lambda$ in
$W_{\lambda}$ and of $T_\lambda T^*_\x M$ in $T_\lambda T^*M$, respectively.

\medskip
For the non-reduced case, a completely similar argument as in the proof of Lemma 11.1 in \cite{Le1} gives the following
\begin{lemma}\label{splitting3}
$\bar{\mathfrak J}^{trans}(\lambda)$ coincides with the Chern connection with the reference vector $\tv$, i.e. $\bar{\mathfrak J}^{trans}_\lambda={\rm span}\left\{\nabla^{\tv}_{\frac{\partial}{\partial x^i}},i=1,...,n\right\}.$
\end{lemma}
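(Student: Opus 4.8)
The plan is to mimic the proof of Lemma~11.1 in \cite{Le1} for the geodesic flow, isolating exactly the place where the potential enters. Recall that the canonical complement of a regular curve is read off from a normal moving frame $(E(t),F(t))$ with $E'=F$ and $F'=-ER$; equivalently, writing a frame of $\bar{\mathfrak J}_\lambda(t)$ as $E(t)=\ell(t)C(t)$, the matrix $C(t)$ is pinned down (up to a constant orthogonal factor) by the Darboux normalizations $\sigma(E,E')=I$ and $\sigma(E',E')=0$, after which $\bar{\mathfrak J}^{trans}_\lambda={\rm span}\{E'(0)\}$. So the first step is to express all the data of the curve at $t=0$ through the flow. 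Since a moving frame of $\bar{\mathfrak J}_\lambda(t)$ is obtained by pushing the vertical coordinate fields back along the flow, $\ell_i(t)=e_*^{-t\vec\ham}\big(\frac{\partial}{\partial p_i}\big|_{e^{t\vec\ham}\lambda}\big)$, the velocity and acceleration of the Jacobi curve at $t=0$ are governed by the iterated brackets $[\vec\ham,\frac{\partial}{\partial p_i}]$ and $[\vec\ham,[\vec\ham,\frac{\partial}{\partial p_i}]]$.

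The key structural input is Lemma~\ref{HamLif}, $\vec\ham=\nabla^{\tv}_{\tv}+\vec U$, together with Lemma~\ref{CalW}(1), which says $\vec U=-(\nabla_\tv U)^v$ is vertical with coefficients depending on $\x$ only. I would first record the first-order fact $[\vec U,\frac{\partial}{\partial p_i}]=0$, whence $[\vec\ham,\frac{\partial}{\partial p_i}]=[\nabla^{\tv}_{\tv},\frac{\partial}{\partial p_i}]$. Thus the velocity Gram matrix $\sigma(\frac{\partial}{\partial p_i},[\vec\ham,\frac{\partial}{\partial p_j}])=g^*_{ij}$ of the non-reduced Jacobi curve at $\lambda$ is literally that of the geodesic flow, so $\bar{\mathfrak J}_\lambda(\cdot)$ is tangent at $\lambda$ to the geodesic non-reduced Jacobi curve, whose canonical complement is the Chern horizontal distribution ${\rm span}\{\nabla^{\tv}_{\frac{\partial}{\partial x^i}}\}$ by \cite{Le1}. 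Since the canonical complement is determined by the $2$-jet of the curve, the two complements can differ only through the second-order term.

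The substance of the proof is therefore to control that second-order data, and this is where I expect the main obstacle. The acceleration $[\vec\ham,[\nabla^{\tv}_{\tv},\frac{\partial}{\partial p_i}]]$ carries the extra term $[\vec U,[\nabla^{\tv}_{\tv},\frac{\partial}{\partial p_i}]]$, whose horizontal part is in general nonzero (it is essentially $\vec U$ applied to $g^*_{ij}$, hence involves $\frac{\partial g^*_{ij}}{\partial p_m}$ and survives precisely when $F$ is genuinely non-Riemannian). I would then have to show that this potential-dependent contribution is absorbed entirely into the curvature map $R(0)$ of the normal frame — where it is destined to produce the Hessian and Chern-curvature terms appearing in Theorems~\ref{CurFin} and \ref{FinMec2} — and does \emph{not} shift ${\rm span}\{E'(0)\}$ off the horizontal distribution. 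For this I would lean on the homogeneity identities Lemma~\ref{minus} ($[\nabla^{\tv}_{\tv},\tv^v]=-\nabla^{\tv}_{\tv}$) and Lemma~\ref{CalW}(2) ($[\vec U,\tv^v]=\vec U$), together with the vanishing Cartan contractions \eqref{cartan} and the $0$-homogeneity of $g^*$, to separate the horizontal and vertical bookkeeping cleanly. The delicate point — and the only step that is not a verbatim transcription of the geodesic argument in \cite{Le1} — is verifying that the horizontal remainder coming from $\vec U$ feeds only the symmetric curvature operator and leaves the canonical Ehresmann connection $\bar{\mathfrak J}^{trans}_\lambda$ equal to the Chern horizontal distribution.
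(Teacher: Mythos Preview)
Your proposal takes the same route as the paper, which itself gives no argument beyond the sentence ``a completely similar argument as in the proof of Lemma~11.1 in \cite{Le1}.'' You have in fact gone further than the paper by isolating exactly where the potential enters: the observation $[\vec U,\partial_{p_i}]=0$ is the right first step and does force the $1$-jets of the mechanical and geodesic non-reduced Jacobi curves to coincide at $\lambda$.

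Your caution about the second-order step is warranted, and this is the one place where your outline is not yet a proof. The canonical complement is determined by the $2$-jet of the curve; concretely, in any moving frame $\ell(t)$ of $\bar{\mathfrak J}_\lambda(t)$ the complement reads
\[
\bar{\mathfrak J}^{\rm trans}_\lambda=\text{span}\Bigl\{\ell'_i(0)+\tfrac{1}{2}\bigl(Q^{-1}(A-Q')\bigr)_{ki}\,\partial_{p_k}\Bigr\},
\]
with $Q=\sigma(\ell,\ell')$, $A=\sigma(\ell',\ell')$ and $Q'(0)$ involving $\sigma(\partial_{p_k},\ell_i''(0))$. The correction $[\vec U,[\nabla^{\tv}_{\tv},\partial_{p_i}]]$ has horizontal part $\frac{\partial U}{\partial x^m}\frac{\partial g^*_{ij}}{\partial p_m}\,\partial_{x^j}$, so it \emph{does} feed into $Q'(0)$ via a symmetric Cartan-tensor term $-\frac{\partial U}{\partial x^m}\frac{\partial g^*_{ik}}{\partial p_m}$. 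Showing that this contribution lands in the curvature $R(0)$ rather than in the connection --- i.e.\ that it is cancelled in the combination $A-Q'$ once one unwinds the geodesic normalisation from \cite{Le1} --- is the actual content of ``completely similar,'' and you should expect to have to reproduce Lee's frame computation in full rather than invoke homogeneity alone. The paper does not supply this verification either; it simply asserts it by reference.
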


For the canonical complement $\mathfrak J^{trans}_\lambda$, we have the following
\begin{lemma}\label{splitting4}$\bar{\mathfrak J}^{trans}_\lambda={\rm span}\left\{\nabla^{\tv}_{\xi^h}-\frac{1}{F(\tv)^2}\cdot g_\tv(\xi^h,\nabla_\tv U)\tv^v,\xi\in\Pi_\lambda\right\}.$
\end{lemma}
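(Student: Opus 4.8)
The plan is to descend the non-reduced complement of Lemma~\ref{splitting3} to the reduced setting by performing the symplectic reduction by the first integral $\ham$, following the Finsler geodesic computation of \cite{Le1} and its general reduction scheme in \cite{LiZe3}; the only genuinely new ingredient is the potential. Recall that $W_\lambda=T_\lambda\mathcal{H}_c/\mathbb{R}\vec\ham$ with $T_\lambda\mathcal{H}_c=\ker d\ham$, and that $\Pi_\lambda$ is the image of the vertical space $T_\lambda T^*_\x M\cap\ker d\ham$. First I would record the role of $\tv^v$. Using $\sigma(\vec\ham,\cdot)=d\ham$ and the identification of the vertical space $T_\lambda T^*_\x M$ with $T_\x M$ via $\xi\mapsto\xi^h$, the condition $\xi\in\Pi_\lambda$ is exactly $g_\tv(\xi^h,\tv)=0$, i.e.\ $\Pi_\lambda\cong\tv^\perp$, while $\tv^v$ corresponds to $\tv$ itself. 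Thus $\tv^v$ is the unique (up to scale) vertical direction $g_\tv$-orthogonal to $\Pi_\lambda$, and it is transverse to $\ker d\ham$ since (see below) $d\ham(\tv^v)=F(\tv)^2\neq0$; this is why $\tv^v$ is the canonical vertical direction along which to project horizontal lifts back into the level set.

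The second step is to identify the generators as the $\ker d\ham$-projections along $\tv^v$ of the horizontal lifts $\nabla^\tv_{\xi^h}$, $\xi\in\Pi_\lambda$, and to compute the projection coefficient from two evaluations of $d\ham$. By Lemma~\ref{hamil}, $\ham=\tfrac12F^*(\p)^2+U$. The kinetic part $\tfrac12F^*(\p)^2$ is constant along the horizontal lifts determined by $\tv$ (the Cartan-tensor corrections drop out by \eqref{cartan}), so $d\ham(\nabla^\tv_w)=dU(w)$ for every $w$, and with $w=\xi^h$ this equals $g_\tv(\xi^h,\nabla_\tv U)$. On the other hand $\tv^v=p_i\tfrac{\partial}{\partial p_i}$ is vertical, so $dU(\tv^v)=0$, while $\tfrac12F^*(\p)^2$ is positively $2$-homogeneous in $\p$ and $\tv^v$ is the radial direction, whence $d\ham(\tv^v)=F^*(\p)^2=F(\tv)^2$ by Theorem~\ref{homo}. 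Therefore the unique scalar making $\nabla^\tv_{\xi^h}-c\,\tv^v\in\ker d\ham$ is $c=g_\tv(\xi^h,\nabla_\tv U)/F(\tv)^2$, and the projected vector is exactly the generator in the statement; when $U\equiv0$ the correction vanishes and we recover $\mathfrak J^{trans}_\lambda={\rm span}\{\nabla^\tv_{\xi^h}\}$ of \cite{Le1}, a useful consistency check.

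It then remains to confirm that the span of these generators is the canonical complement of Theorem~\ref{structure}, not merely some Lagrangian complement of $\Pi_\lambda$. That it is Lagrangian and transverse to $\Pi_\lambda$ is immediate: the horizontal lifts span a Lagrangian subspace ($\sigma(\nabla^\tv_{\tv_1},\nabla^\tv_{\tv_2})=0$), the cross terms vanish because $\sigma(\tv^v,\nabla^\tv_{\xi^h})=-g_\tv(\tv,\xi^h)=0$ by Lemma~\ref{CalW}(3) together with $\xi^h\perp\tv$, and each generator has a nonzero horizontal part in $\tv^\perp$, which cannot be absorbed by the purely vertical $\Pi_\lambda$ nor by the $\tv$-directed horizontal part of $\vec\ham$; hence the span is an $(n-1)$-dimensional Lagrangian complement. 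To see it is the \emph{canonical} one I would reduce the non-reduced normal moving frame $(\bar E,\bar F)$ of Theorem~\ref{structure} through $\ker d\ham$ modulo $\mathbb{R}\vec\ham$ and verify the reduced structural equations \eqref{3structeq} at $t=0$, exactly as in \cite{Le1,LiZe3}.

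The main obstacle is precisely this last verification. The difficulty is that by Lemma~\ref{HamLif} the field $\vec\ham=\nabla^\tv_\tv+\vec U$ lies in neither the vertical nor the horizontal summand, so the reduction does not simply restrict the non-reduced frame; once $U\neq0$ the horizontal lift of a $\tv$-orthogonal vector is no longer annihilated by $d\ham$, and one must track how the $\vec U$-contribution propagates into the reduced frame equations. The commutator bookkeeping needed for this is already isolated in the paper, namely Lemma~\ref{minus} ($[\nabla^\tv_\tv,\tv^v]=-\nabla^\tv_\tv$) and Lemma~\ref{CalW}(2) ($[\vec U,\tv^v]=\vec U$), which together control $[\vec\ham,\tv^v]$ and show that the $\tv^v$-correction is compatible with the flow; feeding these into the reduced structural equations yields \eqref{3structeq} and identifies the span as $\mathfrak J^{trans}_\lambda$.
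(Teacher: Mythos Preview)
Your first two paragraphs are essentially the paper's proof. The paper begins from the ansatz $\nabla^{\tv}_{\xi^h}+A(\xi)\tv^v$ and determines $A(\xi)$ from the single requirement that this vector be ``tangent to $\vec\ham$'', i.e.\ lie in $T_\lambda\mathcal H_c$, which it reads off from $\sigma(\vec\ham,\tv^v)=F(\tv)^2$ together with Lemma~\ref{CalW}; your computation of $d\ham(\nabla^\tv_{\xi^h})=g_\tv(\xi^h,\nabla_\tv U)$ and $d\ham(\tv^v)=F(\tv)^2$ is exactly this, phrased via $d\ham$ rather than $\sigma(\vec\ham,\cdot)$.

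Where you diverge is in your third and fourth paragraphs. The paper's proof simply stops after solving for $A(\xi)$: it does not verify that the resulting span is the \emph{canonical} complement rather than merely a Lagrangian one, nor does it check the reduced structural equations. The ansatz itself---that $\mathfrak J^{trans}_\lambda$ is obtained from the non-reduced complement of Lemma~\ref{splitting3} by projecting each generator into $\ker d\ham$ along the vertical direction $\tv^v$---is taken for granted, implicitly relying on the general reduction scheme for Jacobi curves by first integrals from \cite{AgChZe,LiZe3}. So the work you identify as the ``main obstacle'' is a justification the paper does not supply; you are being more careful than the paper, not less, and the commutator bookkeeping you outline (Lemmas~\ref{minus} and \ref{CalW}(2)) is indeed what one would use, but the paper treats the ansatz as already established.
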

\begin{proof}
Assume $$\bar{\mathfrak J}^{trans}_\lambda={\rm span}\{\nabla^{\tv}_{\xi^h}+A(\xi)\tv^v,\xi\in\Pi_\lambda\}.$$
Note that $\sigma(\vec \ham,\tv^v)=g_{\tv}(\tv,\tv)=F(\tv)^2$. Hence, from
the fact that $\bar{\mathfrak J}^{trans}_\lambda$ is tangent to the Hamiltonian
vector field $\vec \ham$ and  Lemma \ref{CalW}, we get easily that
$$A(\tv)=-\frac{1}{F(\tv)^2}\cdot g_\tv\left(\xi^h,\nabla_\tv U\right),$$which completes the proof of the lemma.
\end{proof}

\subsection{Calculations of the curvatures}
It is convenient to introduce the notation of parallel transport of a vector field along the Hamiltonian flows.

Let $\lambda\in T^*M$ and let $\lambda(t)=e^{t\vec \ham}\lambda$.
Assume that $(E^\lambda(t),F^\lambda(t))$ is a normal moving frame
of the Jacobi curve $\mathfrak J_\lambda(t)$ attached at point
$\lambda$. Let $\mathfrak E$ be the Euler field on $T^*M$, i.e. the
infinitesimal generator of the homotheties  on its fibers. Clearly,
$$T_\lambda(T^*M)=T_\lambda\mathcal
H_{c}\oplus\mathbb R \mathfrak E(\lambda).$$

 The flow $e^{t\vec
\ham}$ on $T^*M$ induces the push-forward maps $e^{t\vec \ham}_*$ between
the corresponding tangent spaces $T_\lambda T^*M$ and
$T_{\lambda(t)}T^*M$, which in turn induce naturally the maps
between the spaces $T_\lambda(T^*M)/{\rm span}\{\vec \ham(\lambda)
\}$ and $T_{\lambda(t)}T^*M/{\rm
span}\{\vec \ham(\lambda(t))\}$.
The map $\mathcal K^t$ between $T_\lambda(T^*M)/{\rm span}\{\vec
\ham(\lambda)\}$ and
$T_{\lambda(t)}T^*M/{\rm span}\{\vec \ham(\lambda(t))\}$, sending $E^\lambda(0)$ to
$e^{t\vec \ham}_*E^{\lambda}(t)$, $F^\lambda(0)$ to $e^{t\vec
\ham}_*F^{\lambda}(t)$, and the equivalence class of $\mathfrak
E(\lambda)$ to the
equivalence class of $\mathfrak E(e^{t\vec \ham}\lambda)$, is independent of the
choice of normal moving frames. The map $\mathcal K^t$ is called
\emph{the parallel transport} along the extremal $e^{t\vec
\ham}\lambda$ at time $t$.  For any $v\in T_\lambda(T^*M)/{\rm
span}\{\vec \ham(\lambda)\}$, its
image $v(t)=\mathcal K^t(v)$ is called \emph{ the parallel transport
of $v$ at time $t$}.

 Note that from the definition of the non-reduced
Jacobi curves and the construction  of normal moving frames it
follows that the restriction of the parallel transport $\mathcal
K^t$ to the vertical subspace $T_\lambda(T_\x^*M)$ of
$T_\lambda(T^*M)$ can be considered as a map onto the vertical
subspace $T_{\lambda(t)}(T_{\pi(\lambda(t))}^*M)$ of
$T_{\lambda(t)}(T^*M)$. A vertical vector field $V$ is called
\emph{parallel} if $V(e^{t\vec \ham}\lambda)=\mathcal
K^t\bigl(V(\lambda)\bigr)$.

%
%

\medskip
The rest of the draft is devoted to the proof of Theorems \ref{CurFin} and \ref{FinMec2}.
\begin{proof} [Proof of Theorem \ref{CurFin}] Let $\bar W_1,\bar W_2$ be parallel vertical vector fields such
that $\bar W_1(\lambda)=\bar\xi, \bar W_2(\lambda)=\bar\eta$. From Lemma \ref{splitting3} the lift of $W_i$ in ${\mathfrak J}_\lambda^{trans}$ is $\nabla^{\tv}_{W_i^h}$.
Then it follows from  Lemma \ref{CalW} that
\begin{equation}\label{preli}
g_\tv(({\bar{\mathfrak R}}_\lambda\xi)^h,\eta^h)=-\sigma(\bar{\mathfrak R}_\lambda\xi,\nabla^{\tv}_{\eta^h})=-\sigma\left([\vec{\ham},\nabla^{\tv}_{W_1^h}],\nabla^{\tv}_{W_2^h}\right).
\end{equation}
Combining this with Lemma \ref{HamLif}, Lemma \ref{main1} and Lemma \ref{main2}, we complete the proof of the present theorem.
\end{proof}

\begin{proof}[Proof of Theorem \ref{FinMec2}]
Let $W_1,W_2$ be  vertical parallel vector fields such that $W_1(\lambda)=\xi, W_2(\lambda)=\eta$. From Lemma \ref{HamLif}, Lemma \ref{CalW} and Lemma \ref{splitting4} we have
\begin{equation*}
\begin{split}
&g_\tv(({\mathfrak R}_\lambda\xi)^h,\eta^h)\\
=&-\sigma\left(\left[\nabla_\tv^\tv+\vec U,\nabla^{\tv}_{W_1^h}
-\frac{ g_\tv(W_1^h,\nabla_\tv U)}{F(\tv)^2}\tv^v\right],\nabla^{\tv}_{W_2^h}-\frac{ g_\tv(W_2^h,\nabla_\tv U)}{F(\tv)^2}\tv^v\right).
\end{split}
\end{equation*}

Next we write  the right-hand side of the last identity as the sum of the following $T_is$ and calculate each $T_i$ in turn. Note that when dealing with $T_5$ and $T_6$ we used the identity
$$\sigma\left(\tv^v,\nabla^\tv_{W_2^h}\right)=-g_{\tv}(\tv,W_2^h)=0.$$
\begin{equation*}
\begin{split}
T_1&=-\sigma\left([\nabla_{\tv}^{\tv},\nabla^{\tv}_{W_1^h}],\nabla^{\tv}_{W_2^h}\right),\\
T_2&=\frac{g_\tv(W_2^h,\nabla_\tv U)}{F(\tv)^2}\sigma\left([\nabla_{\tv}^{\tv},\nabla^{\tv}_{W_1^h}],\tv^v\right),\\
T_3&=-\sigma\left([\vec U,\nabla^{\tv}_{W_1^h}],\nabla^{\tv}_{W_2^h}\right),\\
T_4&=\frac{g_\tv(W_2^h,\nabla_\tv U)}{F(\tv)^2}\sigma\left([\vec U,\nabla^{\tv}_{W_1^h}],\tv^v\right),\\
T_5&=\frac{g_\tv(W_1^h,\nabla_\tv U)}{F(\tv)^2}\sigma\left([\nabla_{\tv}^{\tv},\tv^v],\nabla^{\tv}_{W_2^h}\right),\\
T_6&=\frac{g_\tv(W_1^h,\nabla_\tv U)}{F(\tv)^2}\sigma\left(\left[\vec U,\tv^v\right],\nabla^{\tv}_{W_2^h}\right),\\
T_7&=-\frac{g_\tv(W_1^h,\nabla_\tv U)}{F(\tv)^2}\cdot\frac{g_\tv(W_2^h,\nabla_\tv U)}{F(\tv)^2}\sigma([\nabla_{\tv}^{\tv},\tv^v],\tv^v).
\end{split}
\end{equation*}
%

\medskip
First of all, from Lemma \ref{main1} we have
\begin{equation}\label{T1}
T_1=-g_\tv\left(\mathcal R(W_2^h,\tv)\tv,W_1^h\right).
\end{equation}

\medskip
For $T_2$, we use that the symplectic form $\sigma$ is closed to show
\begin{equation*}
\begin{split}
0&=d\sigma(\nabla_{\tv}^{\tv},\nabla^{\tv}_{W_1^h},\tv^v)\\
&=\nabla_{\tv}^{\tv}(\sigma(\nabla^{\tv}_{W_1^h},\tv^v))-\nabla^{\tv}_{W_1^h}(\sigma(\nabla_{\tv}^{\tv},\tv^v))\\
&+\tv^v(\sigma(\nabla_{\tv}^{\tv},\nabla_{W_1^h}^{\tv}))-\sigma([\nabla_{\tv}^{\tv},\nabla^{\tv}_{W_1^h}],\tv^v)\\
&+\sigma([\nabla_{\tv}^{\tv},\tv^v],\nabla^{\tv}_{W_1^h})-\sigma([\nabla^{\tv}_{W_1^h},\tv^v],\nabla_{\tv}^{\tv}).
\end{split}
\end{equation*}

Then we make the following calculations. Using the last item in Lemma \ref{CalW} we have
\begin{equation*}
\sigma(\nabla^{\tv}_{W_1^h},\tv^v)=g_\tv(W_1^h,\tv)=0
\end{equation*}
and
\begin{equation*}
\begin{split}
\nabla^{\tv}_{W_1^h}(\sigma(\nabla_{\tv}^{\tv},\tv^v))&=\nabla^{\tv}_{W_1^h}(g_\tv(\tv,\tv))=\nabla^{\tv}_{W_1^h}(F(\tv)^2)\\
&=\sigma(2\vec\ham-2\vec U,\nabla^{\tv}_{W_1^h})=2\sigma(\nabla^{\tv}_{\tv},\nabla^{\tv}_{W_1^h})=0.
\end{split}
\end{equation*}
Note that in the last equality we used Lemma \ref{hamil}, Lemma \ref{HamLif} and the fact  that Chern connection is torsion free.
Next we have
$$\tv^v(\sigma(\nabla_{\tv}^{\tv},\nabla_{W_1^h}^{\tv}))=\tv^v(0)=0.$$
Similarly, we apply Lemma \ref{minus} to get
$$\sigma([\nabla_{\tv}^{\tv},\tv^v],\nabla^{\tv}_{W_1^h})=-\sigma(\nabla_{\tv}^{\tv},\nabla^{\tv}_{W_1^h})=0.$$
And we also have
\begin{equation*}
\begin{split}
\sigma([\nabla^{\tv}_{W_1^h},\tv^v],\nabla_{\tv}^{\tv})&=-[\nabla^{\tv}_{W_1^h},(\tv)^v](\ham-U)\\
&=-\nabla^{\tv}_{W_1^h}(\tv^v(\ham-U))+\tv^v(\nabla^{\tv}_{W_1^h}(\ham-U))\\
&=-\nabla^{\tv}_{W_1^h}(2(\ham-U))+\tv^v(\sigma(\vec\ham-\vec U,\nabla^{\tv}_{W_1^h}))\\
&=-2\sigma(\nabla_{\tv}^{\tv},\nabla^{\tv}_{W_1^h})+\tv^v(\sigma(\nabla_{\tv}^{\tv},\nabla^{\tv}_{W_1^h}))=0.
\end{split}
\end{equation*}
Summarizing above we have
\begin{equation}\label{T2}
T_2=0.
\end{equation}

\smallskip
For $T_3$, from Lemma \ref{main2} we have
\begin{equation*}\label{T3}
T_3={\mathbf{
Hess_{\tv}}}\ U(W_1^h,W_2^h)+g_\tv\left(\mathbf{P_{\tv}}\left(W_1^h,\nabla_\tv U,W_2^h\right),\tv\right).
\end{equation*}

\smallskip
The case of $T_4$ is similar to that of $T_2$. we first of all have
\begin{equation*}
\begin{split}
0&=d\sigma(\vec U,\nabla^{\tv}_{W_1^h},\tv^v)\\
&=\tv^v(\sigma(\vec U,\nabla_{\tv}^{\tv}))-\sigma([\vec U,\nabla^{\tv}_{W_1^h}],\tv^v)\\
&+\sigma([\vec U,\tv^v],\nabla^{\tv}_{W_1^h})-\sigma([\nabla^{\tv}_{W_1^h},\tv^v],\vec U).
\end{split}
\end{equation*}
Then we make the following calculations. Using  Lemma \ref{CalW} we have
\begin{equation*}
\sigma([\vec U,\tv^v],\nabla^{\tv}_{W_1^h})=g_{\tv}(\nabla_\tv U,W_1^h)
\end{equation*}
and
\begin{equation*}
\begin{split}
\sigma([\nabla^{\tv}_{W_1^h},\tv^v],\vec U)&=-[\nabla^{\tv}_{W_1^h},\tv^v](U)\\
&=\tv^v(\nabla^{\tv}_{W_1^h}(U))\\
&=\tv^v(\sigma(\vec U,\nabla^{\tv}_{W_1^h})).
\end{split}
\end{equation*}
And using  Lemma \ref{CalW} again we obtain
$$\sigma([\vec U,\tv^v],\nabla^{\tv}_{W_1^h})=\sigma(\vec U,\nabla^{\tv}_{W_1^h})=g_{\tv}(\nabla_\tv U,W_1^h).$$
Summarizing, we have
\begin{equation}\label{T4}
T_4=\frac{1}{F(v)^2}g_{\tv}(W_1^h,\nabla_\tv U)g_{\tv}(W_2^h,\nabla_\tv U).
\end{equation}

\medskip
Using Lemma \ref{minus} and the fact that the Chern connection is torsion free,  we get
\begin{equation}\label{T}
\sigma([\nabla_{\tv}^{\tv},\tv^v],\nabla^{\tv}_{W_2^h})=-\sigma(\nabla_{\tv}^{\tv},\nabla^{\tv}_{W_2^h})=0.
\end{equation}
Hence $T_5=0.$

\smallskip
For $T_6$, since
$$\sigma([\vec U,\tv^v],\nabla^{\tv}_{W_2^h})=\sigma(\vec U,\nabla^{\tv}_{W_2^h})=g_{\tv}(W_2^h,\nabla_\tv\ U),$$
then
\begin{equation}
T_6=\frac{1}{F(\tv)^2}g_\tv(W_1^h,\nabla_\tv U)g_\tv(W_2^h,\nabla_\tv U).
\end{equation}
%

\smallskip
For $T_7$, we make the following calculations.
$$\sigma([\nabla_{\tv}^{\tv},\tv^v],\tv^v)=-\sigma(\nabla_{\tv}^{\tv},\tv^v)=-F(\tv)^2.$$
This gives
\begin{equation}\label{T7}
T_7=\frac{1}{F(\tv)^2}g_\tv(W_1^h,\nabla_\tv U)g_\tv(W_2^h,\nabla_\tv W).
\end{equation}

Summarizing the above calculations for  $T_i(1\leq i\leq 7)$, we complete the proof of the theorem.

\end{proof}


%



\medskip

\subsection*{Acknowledgement.}
The author would like to acknowledge Professor Zhiguang Hu for stimulating discussions on Finsler geometry.

\end{document}